\providecommand{\U}[1]{\protect\rule{.1in}{.1in}}
\newtheorem{theorem}{Theorem}
\theoremstyle{plain}
\newtheorem{corollary}{Corollary}
\newtheorem{definition}{Definition}
\newtheorem{example}{Example}
\newtheorem{lemma}{Lemma}
\newtheorem{problem}{Problem}
\newtheorem{remark}{Remark}
\numberwithin{equation}{section}
\begin{document}
\title[ ]{A note on the isotonic vector-valued convex functions}
\author{Constantin P. Niculescu}
\address{Department of Mathematics, University of Craiova\\
Craiova 200585, Romania}
\email{constantin.p.niculescu@gmail.com}
\author{Octav Olteanu}
\address{Department of Mathematics-Informatics\\
University Politehnica of Bucharest, \linebreak Bucharest 060042, Romania}
\email{octav.olteanu50@gmail.com}
\thanks{Corresponding author: Constantin P. Niculescu.}
\date{April 14, 2020}
\subjclass[2000]{Primary: 47B65, 47H07; Secondary: 28B15, 46A22.}
\keywords{Order complete vector space, ordered Banach space, isotone function, convex
function, subdifferential, sublinear operator}

\begin{abstract}
The property of isotonicity of a continuous convex function defined on the
entire space or only on the positive cone is characterized via
subdifferentials. Numerous examples illustrating the obtained results are included.

\end{abstract}
\maketitle

\section{Introduction}

An important notion in nonlinear functional analysis is that of convex
function. The existing literature is really vast, a simple search on Google
Scholars offering thousands of titles, starting with the pioneering work on
sublinear operators done by Kantorovich \cite{Kan}, Rubinov \cite{Rub1970},
\cite{Rub1977}, Ioffe and Levin \cite{IL1972}, Levin \cite{Lev1972}, Linke
\cite{Lin1972}, Feldman \cite{Fe1975} and Kutateladze \cite{K1979}, to which
we should add the monographs of Rockafellar \cite{Roc}, J. M. Borwein and J.
Vanderwerff \cite{BV2009}, H\"{o}rmander \cite{Hor94}, Kusraev \cite{Kus2000},
Kusraev and Kutateladze \cite{KK1995}, Simon \cite{Simon2011}, Z\u{a}linescu
\cite{Zal2002} etc.

The definition of a convex function needs the existence of an order structure
on its codomain, usually supposed to be a real ordered vector space, or a real
vector space with a richer structure (like an ordered Banach space, a Banach
lattice, an order complete Banach lattice etc.). We always assume that the
positive cone $X_{+}$ of the ordered linear space $X$ under attention is
proper $(-X_{+}\cap X_{+}=\left\{  0\right\}  )$ and generating $(X=X_{+}%
-X_{+});$ in the case of ordered Banach spaces, these assumptions are
supplemented by the requirement that $X_{+}$ is closed and $0\leq x\leq y$ in
$X$ implies $\left\Vert x\right\Vert \leq\left\Vert y\right\Vert .$ A
convenient way to emphasize the properties of ordered Banach spaces is that
described by Davies in \cite{Davies1968}. According to Davies, a real Banach
space $E$ endowed with a closed and generating cone $E_{+}$ such that%
\[
\left\Vert x\right\Vert =\inf\left\{  \left\Vert y\right\Vert :y\in E,\text{
}-y\leq x\leq y\right\}  \text{\quad for all }x\in E,
\]
is called a \emph{regularly ordered Banach space}%
\index{space!regularly ordered}%
.\emph{ }Examples are the Banach lattices and some other spaces such as
$\operatorname*{Sym}(n,\mathbb{R)}$, the ordered Banach space of all $n\times
n$-dimensional symmetric matrices with real coefficients. The norm of a
symmetric matrix $A$ is defined by the formula%
\[
\left\Vert A\right\Vert =\sup_{\left\Vert x\right\Vert \leq1}\left\vert
\langle Ax,x\rangle\right\vert ,
\]
and the positive cone $\operatorname*{Sym}^{+}(n,\mathbb{R})$ of
$\operatorname*{Sym}(n,\mathbb{R})$ consists of all symmetric matrices $A$
such that $\langle Ax,x\rangle\geq0$ for all $x.$

Every ordered Banach space can be renormed by an equivalent norm to become a
regularly ordered Banach space. For details, see Namioka \cite{Nam}. Some
other useful properties are listed below.

\begin{lemma}
\label{lem1}Suppose that $E$ is a regularly ordered Banach space. Then:

$(a)$ There exists a constant $C>0$ such that every element $x\in E$ admits a
decomposition of the form $x=u-v$ where $u,v\in E_{+}$ and $\left\Vert
u\right\Vert ,\left\Vert v\right\Vert \leq C\left\Vert x\right\Vert .$

$(b)$ The dual space of $E,$ $E^{\ast},$ when endowed with the dual cone
\[
E_{+}^{\ast}=\left\{  x^{\ast}\in E^{\ast}:x^{\ast}(x)\geq0\text{ for all
}x\in E_{+}\right\}
\]
is a regularly ordered Banach space.

$(c)\ x\leq y$ in $E$ is equivalent to $x^{\ast}(x)\leq x^{\ast}(y)$ for all
$x^{\ast}\in E_{+}^{\ast}.$
\end{lemma}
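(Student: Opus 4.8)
The plan is to handle the three items in turn, with $(b)$ carrying essentially all of the content. For $(a)$ I would use the defining formula of a regularly ordered space directly. Given $x\ne 0$, choose $y$ with $-y\le x\le y$ and $\|y\|\le 2\|x\|$ — possible because the infimum in Davies' formula equals $\|x\|$ — and observe that $-y\le x\le y$ forces $y+y\ge 0$, hence $y\in E_{+}$. Setting $u=\tfrac{1}{2}(y+x)$ and $v=\tfrac{1}{2}(y-x)$ gives $u,v\in E_{+}$, $x=u-v$, and $\|u\|,\|v\|\le\tfrac{1}{2}(\|y\|+\|x\|)\le\tfrac{3}{2}\|x\|$, so any constant $C\ge 3/2$ works; the case $x=0$ is trivial.

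For $(b)$, the facts that $E_{+}^{\ast}$ is a convex cone, weak-$\ast$-closed (hence norm-closed), and proper are immediate — properness because $E_{+}$ is generating. The substantive claim is the norm identity
\[
\|x^{\ast}\|=\inf\{\|y^{\ast}\|:y^{\ast}\in E_{+}^{\ast},\ -y^{\ast}\le x^{\ast}\le y^{\ast}\}\qquad(x^{\ast}\in E^{\ast}),
\]
which also yields that $E_{+}^{\ast}$ is generating, via $x^{\ast}=y^{\ast}-(y^{\ast}-x^{\ast})$. One inequality is elementary: if $-y^{\ast}\le x^{\ast}\le y^{\ast}$ and $\|x\|<1$, pick (by regularity of $E$) an element $u\in E_{+}$ with $-u\le x\le u$ and $\|u\|<1$; writing $x=\tfrac{1}{2}\big((u+x)-(u-x)\big)$ and applying $x^{\ast}\le y^{\ast}$ and $-x^{\ast}\le y^{\ast}$ to the positive elements $u\pm x$ gives $|x^{\ast}(x)|\le y^{\ast}(u)\le\|y^{\ast}\|$, hence $\|x^{\ast}\|\le\|y^{\ast}\|$.

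The reverse inequality is the crux, and is where Hahn--Banach enters. Fix $x^{\ast}$, put $\alpha=\|x^{\ast}\|$, and define on the cone $E_{+}$ the functional $|x^{\ast}|(u)=\sup\{x^{\ast}(z):-u\le z\le u\}$. I would check that $|x^{\ast}|$ is positively homogeneous and \emph{superadditive} on $E_{+}$ (only superadditive, since $E$ need not have the Riesz decomposition property), that $|x^{\ast}|(u)\le\alpha\|u\|$ because $-u\le z\le u$ forces $\|z\|\le\|u\|$, and — using the regularity of $E$, which guarantees that the set $\{z:-u\le z\le u\ \text{for some}\ u\in E_{+},\ \|u\|\le1\}$ lies between the open and the closed unit ball of $E$ — that $\sup\{|x^{\ast}|(u):u\in E_{+},\ \|u\|\le1\}=\alpha$. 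Thus on $E_{+}$ the superlinear functional $|x^{\ast}|$ is dominated by the sublinear functional $u\mapsto\alpha\|u\|$, and a Hahn--Banach sandwich over the cone (extend $u\mapsto\alpha\|u\|$ to a sublinear functional on $E=E_{+}-E_{+}$ by inf-convolution with the extension of $-|x^{\ast}|$, then apply the classical Hahn--Banach theorem) produces a linear functional $y^{\ast}$ on $E$ with $|x^{\ast}|(u)\le y^{\ast}(u)\le\alpha\|u\|$ for all $u\in E_{+}$. The left inequality forces $y^{\ast}(u)\ge|x^{\ast}(u)|\ge0$, so $y^{\ast}\in E_{+}^{\ast}$ and $-y^{\ast}\le x^{\ast}\le y^{\ast}$; the right inequality, combined again with regularity of $E$, gives $\|y^{\ast}\|\le\alpha$. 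The main obstacle I anticipate is exactly this last circle of ideas: because $|x^{\ast}|$ is merely superadditive, a naive argument only yields an equivalent renorming of $E^{\ast}$, and it is the identity $\sup\{|x^{\ast}|(u):\|u\|\le1\}=\|x^{\ast}\|$ — a direct consequence of the regularity of the norm of $E$ — that is needed to recover the exact Davies equality.

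For $(c)$, the direct implication is just the definition of $E_{+}^{\ast}$. For the converse, suppose $x^{\ast}(x)\le x^{\ast}(y)$ for every $x^{\ast}\in E_{+}^{\ast}$ but $y-x\notin E_{+}$; since $E_{+}$ is closed and convex, Hahn--Banach separation provides $x^{\ast}\in E^{\ast}$ with $x^{\ast}(y-x)<\inf_{w\in E_{+}}x^{\ast}(w)$, and as $E_{+}$ is a cone the right-hand infimum equals $0$, forcing $x^{\ast}\in E_{+}^{\ast}$ and $x^{\ast}(y-x)<0$ — a contradiction.
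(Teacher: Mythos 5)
Your proposal is correct, but it does substantially more than the paper, which disposes of the lemma by declaring $(a)$ trivial, citing Davies (Lemma 2.4) for $(b)$, and citing the Hahn--Banach separation theorem for $(c)$. Your arguments for $(a)$ and $(c)$ are exactly the intended ones (the $u=\tfrac12(y+x)$, $v=\tfrac12(y-x)$ decomposition, and separation of $y-x$ from the closed cone $E_{+}$ using that the infimum of a linear functional over a cone containing $0$ is either $0$ or $-\infty$). For $(b)$ you supply a genuine self-contained proof of the outsourced step: the easy estimate $\|x^{\ast}\|\le\|y^{\ast}\|$ via $x=\tfrac12\bigl((u+x)-(u-x)\bigr)$, and the converse via the superadditive modulus $|x^{\ast}|(u)=\sup\{x^{\ast}(z):-u\le z\le u\}$ sandwiched against $u\mapsto\alpha\|u\|$. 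Two small remarks on that part. First, the one point your sketch leaves implicit is that the inf-convolution $p(x)=\inf\{\alpha\|w\|-|x^{\ast}|(v):v\in E_{+},\,x=w-v\}$ does not take the value $-\infty$; this follows from the bound $|x^{\ast}|(v)\le\alpha\|v\|\le\alpha(\|w\|+\|x\|)$ that you have already established, so it is a gap in exposition rather than in substance, but it is the hypothesis that makes the sandwich legitimate. Second, the identity $\sup\{|x^{\ast}|(u):u\in E_{+},\,\|u\|\le1\}=\|x^{\ast}\|$, while true by your unit-ball observation, is not actually needed for the exact Davies equality: the upper estimate only requires $|x^{\ast}|\le\alpha\|\cdot\|$ on $E_{+}$ (giving some $y^{\ast}$ with $\|y^{\ast}\|\le\alpha$), and the matching lower estimate was obtained independently in the elementary direction. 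What your route buys is independence from the reference; what the paper's route buys is brevity, at the cost of hiding the only nontrivial content of the lemma inside a citation.
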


\begin{proof}
The assertion $(a)$ is trivial. For $(b)$, see Davies \cite{Davies1968}, Lemma
2.4. The assertion $(c)$ is an easy consequence of the Hahn-Banach separation
theorem; see \cite{NP2018}, Theorem 2.5.3, p. 100.
\end{proof}

The positive linear operators acting on ordered Banach spaces are necessarily
continuous. The following result appears in a slightly different form in the
book of Schaefer and Wolff \cite{SW1999}, Theorem 5.5 $(ii)$, p. 228. Since
the way to cover the details is not obvious, we have included here our proof.

\begin{lemma}
\label{lem1*}Every positive linear operator $T:E\rightarrow F$ acting on
ordered Banach spaces is continuous.
\end{lemma}

\begin{proof}
According to a remark above we may assume that both spaces $E$ and $F$ are
regularly ordered Banach spaces. A moment's reflection shows that the property
of continuity of $T$ is equivalent to the fact that $T$ maps the bounded
sequences into bounded sequences. Taking into account Lemma \ref{lem1} $(a)$,
one can restate this as $T$ maps positive bounded sequences into positive
bounded sequences. Thus, if $T$ would be not continuous, then there exists a
sequence $(x_{n})_{n}$ of positive norm-1 elements of $E$ such that
$\left\Vert T(x_{n})\right\Vert \rightarrow\infty.$ According to the
Banach-Steinhaus theorem (the uniform boundedness principle) this implies the
existence of a continuous linear functional $y^{\ast}\in F^{\ast}$ such that
$\left\vert y^{\ast}\left(  T(x_{n})\right)  \right\vert \rightarrow\infty.$
Taking into account the assertions $(b)$ and $(a)$ of Lemma \ref{lem1}, we may
also assume that $y^{\ast}\geq0.$ Passing to a subsequence if necessary we
arrive at the situation when $y^{\ast}\left(  T(x_{n})\right)  \geq2^{n}$ for
all $n\in\mathbb{N}.$ Put $x=\sum_{n=1}^{\infty}2^{-n}x_{n}.$ Then
\[
y^{\ast}(T(x))\geq\sum_{k=1}^{n}2^{-k}y^{\ast}(T(x_{k}))\geq n
\]
for all $n\in\mathbb{N},$ a contradiction.
\end{proof}

A thorough presentation of the theory of Banach lattices is available in the
book of Meyer-Nieberg \cite{MN}, while additional information on the case of
ordered Banach spaces (and other ordered topological vector spaces) can be
found in the book of Schaefer and Wolff \cite{SW1999}.

In the algebraic context, a map $\Phi$ from a convex subset $C$ of the vector
space $E$ and taking values in an ordered vector space $F$ is called a
\emph{convex function} if%
\[
\Phi((1-\lambda)x+\lambda y)\leq(1-\lambda)\Phi(x)+\lambda\Phi(y)\text{\quad
for all }x,y\in C\text{ and }\lambda\in\lbrack0,1].
\]

In the topological context (when $E$ is a Banach space and $F$ is an ordered
Banach space$)$ we will be interested in the \emph{continuous convex
functions.} Notice that unlike the finite dimension case, not every convex
function acting on an infinite dimensional space is continuous. Indeed, every
infinite dimensional normed space admits a discontinuous linear functional;
see \cite{NP2018}, Remark 3.1.13, p. 113.

The aim of the present paper is to characterize the convex functions $\Phi$
which are \emph{isotone} (or order preserving) in the sense that
\[
x\leq y\text{ implies }\Phi(x)\leq\Phi(y).
\]
The use of this terminology is motivated by the need to avoid any possible
confusion with another concept of \emph{monotone} map, frequently used in
variational analysis; see \cite{BV2009} and \cite{Roc} (as well as the
references therein).

As was noticed by various authors (see, for example, Amann \cite{Amann1974},
\cite{Amann1976}), important results in nonlinear analysis are related to the
class of isotone convex functions. Also this class of functions offers the
natural framework for many interesting inequalities. See \cite{APPV} and the
references therein.

In this paper the isotonicity of a continuous convex function is characterized
in terms of positivity of its subgradients. For the convenience of the reader,
a proof of the property of subdifferentiability of continuous convex functions
based on the extension properties of linear operators dominated by a convex
function is presented in Section 2. The main new results are presented in
Section 3. Theorems 4 and 5 give characterizations of the isotonicity of
convex functions defined respectively on the entire space and on the positive
cone, provided its interior is nonempty. The case where the interior of the
positive cone is empty is covered by Theorem 6.

Examples illustrating the richness of the class of isotone convex functions
make the objective of Section 4 and Section 5, the last being designated to a
description of the isotone sublinear operators associated to Choquet's integral.

\section{The Generalized\emph{ }Hahn-Banach Extension Theorem and Its
Consequences}

In what follows $C$ is a convex subset of a Banach space $E$ and $F$ is an
ordered Banach space (or a similar space with stronger properties).

Given a map $\Phi:C\rightarrow F,$ we call \emph{subgradient} \emph{of} $\Phi$
\emph{at a point} $x_{0}\in C$ every continuous linear operator
$T:E\rightarrow F$ such that%
\begin{equation}
\Phi(x)\geq\Phi(x_{0})+T(x-x_{0})\text{\quad for all }x\in C. \label{eqsdiff}%
\end{equation}
The set $\partial_{x_{0}}\Phi,$ of all such subgradients is referred to as the
\emph{subdifferential of} $\Phi$ \emph{at} $x_{0}.$ Geometrically, the
subgradient inequality (\ref{eqsdiff}) asserts that $\Phi$ admits a continuous
affine minorant $A(x)=\Phi(x_{0})+T(x-x_{0})$ such that $A(x_{0})=\Phi
(x_{0}).$

The set valued map $x_{0}\rightarrow\partial_{x_{0}}\Phi$ is referred to as
the \emph{subdifferential} of $\Phi.$ The following basic result is due to
Valadier \cite{Val}, p. 68, and Zowe \cite{Zowe}, p. 286.

\begin{theorem}
\label{thm1}Suppose that $C$ is a convex subset of a Banach space $E$ and $F$
is an order complete Banach space. Then the subdifferential of any continuous
convex function $\Phi:C\rightarrow F$ at an interior point $x_{0}\in C$ is a
nonempty convex set.
\end{theorem}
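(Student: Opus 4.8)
The statement splits into two parts, which I would treat separately.

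\emph{Convexity.} This part is routine. If $T_{1},T_{2}\in\partial_{x_{0}}\Phi$ and $\lambda\in[0,1]$, then $\lambda T_{1}+(1-\lambda)T_{2}$ is again a continuous linear operator from $E$ to $F$, and taking the convex combination with weights $\lambda$ and $1-\lambda$ of the two subgradient inequalities $\Phi(x)\geq\Phi(x_{0})+T_{i}(x-x_{0})$ yields $\Phi(x)\geq\Phi(x_{0})+\bigl(\lambda T_{1}+(1-\lambda)T_{2}\bigr)(x-x_{0})$ for all $x\in C$. Hence $\partial_{x_{0}}\Phi$ is convex, and the whole problem reduces to producing a single subgradient.

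\emph{Nonemptiness.} After translating both in the domain and in the codomain I may assume $x_{0}=0$ and $\Phi(0)=0$, so that the task becomes to find a continuous linear operator $T\colon E\to F$ with $T(x)\leq\Phi(x)$ for every $x\in C$. Since $0$ is interior to $C$, for each $h\in E$ one has $th\in C$ for all sufficiently small $t>0$, which lets me define
\[
p(h)=\inf\bigl\{\,t^{-1}\Phi(th)\ :\ t>0,\ th\in C\,\bigr\}.
\]
The convexity of $\Phi$ makes the difference quotients $t\mapsto t^{-1}\Phi(th)$ nondecreasing in $t$, and comparing, via the convexity inequality and the representation $0=\tfrac{t}{s+t}(-sh)+\tfrac{s}{s+t}(th)$, the values of $\Phi$ at $th$ and at $-sh$ shows that this family of difference quotients is bounded below in $F$; as $F$ is order complete, the infimum exists, so $p$ is a well-defined map $E\to F$. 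It is positively homogeneous by construction, subadditive because $\Phi$ is convex, satisfies $p(0)=0$, and is dominated by $\Phi$ on $C$, i.e.\ $p(h)\leq\Phi(h)$ whenever $h\in C$. Moreover the continuity of $\Phi$ at $0$ makes $p$ norm-bounded on a neighbourhood of $0$: from $-\Phi(-h)\leq-p(-h)\leq p(h)\leq\Phi(h)$ for $\|h\|$ small, together with the monotonicity of the norm of $F$, one gets $\|p(h)\|\leq 3\varepsilon$ once $\|h\|\leq\delta$.

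Now I would invoke the Hahn--Banach--Kantorovich extension theorem, which applies precisely because $F$ is order complete: the null operator on the trivial subspace $\{0\}$ is dominated by the sublinear operator $p$, hence extends to a linear operator $T\colon E\to F$ with $T(x)\leq p(x)$ for all $x\in E$. The inequalities $-p(-x)\leq T(x)\leq p(x)$ combined with the local boundedness of $p$ force $\|T(x)\|\leq C\|x\|$ near $0$, so $T$ is continuous; and $T(x-x_{0})\leq p(x-x_{0})\leq\Phi(x)-\Phi(x_{0})$ shows $T\in\partial_{x_{0}}\Phi$.

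The step I expect to be the main obstacle is establishing that $p$ is a well-behaved sublinear operator: that the defining infimum really exists in $F$ and that $p$ inherits local boundedness from $\Phi$. This is where both hypotheses enter essentially — order completeness of $F$ to form the infimum, and continuity of $\Phi$ (which, as the Introduction stresses, is a genuine restriction in infinite dimensions) together with the monotone norm to convert the order estimates above into norm estimates, thereby guaranteeing that the extension $T$ is continuous and not merely linear.
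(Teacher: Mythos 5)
Your proof is correct, and it fills in precisely the Valadier--Zowe argument that the paper only sketches: the paper does not actually prove Theorem~\ref{thm1}, but refers to \cite{Val} and \cite{Zowe}, records the key idea (directional derivatives plus a vector-valued Hahn--Banach theorem), and carries out in detail only the whole-space case as Corollary~\ref{cor3} (via Corollary~\ref{cor2} with $H=\{0\}$ and $\Phi$ replaced by $\Phi(\cdot+x_{0})-\Phi(x_{0})$). Your sublinear operator $p(h)=\inf\{t^{-1}\Phi(th):t>0,\ th\in C\}$ is exactly the directional derivative $\Phi'(x_{0};h)$ realized as an order infimum of the nonincreasing difference quotients, and it is the device needed to pass from the convex subset $C$ to all of $E$ so that the paper's Theorem~\ref{thm2} (extension from $H=\{0\}$) applies; the lower bound $t^{-1}\Phi(th)\geq-s^{-1}\Phi(-sh)$, the domination $T\leq p\leq\Phi$ on $C$, and the continuity argument all check out. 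Two steps deserve to be written out rather than asserted. First, the subadditivity of $p$ requires interchanging an infimum with a sum: from $t^{-1}\Phi(t(h+k))\leq(2t)^{-1}\Phi(2th)+(2t)^{-1}\Phi(2tk)$ you must know that $\inf_{t}(a_{t}+b_{t})=\inf_{t}a_{t}+\inf_{t}b_{t}$ for the two decreasing families of difference quotients; this holds whenever the infima exist (fix $s$, note $c\leq a_{t}+b_{s}$ for $t\leq s$, take the infimum over $t$ and then over $s$), but it is a second, tacit use of order completeness beyond the extension theorem. Second, the estimate $\Vert p(h)\Vert\leq3\varepsilon$ uses the monotonicity of the norm on $F_{+}$ ($0\leq x\leq y$ implies $\Vert x\Vert\leq\Vert y\Vert$), which is part of the paper's standing hypotheses on ordered Banach spaces and plays the same role as the squeeze argument in the proof of Corollary~\ref{cor2}, so that step is legitimate once the hypothesis is cited.
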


\begin{corollary}
\label{cor1}Suppose that $C$ is an open convex subset of the Banach space $E$
and $F$ is an order complete Banach space. Then every continuous convex
function $\Phi:C\rightarrow F$ is the upper envelope of its continuous affine minorants.
\end{corollary}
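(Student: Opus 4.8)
The plan is to show that for every point $x_{0}\in C$ the value $\Phi(x_{0})$ is not merely an upper bound of the values at $x_{0}$ of the continuous affine minorants of $\Phi$, but is in fact attained by one of them; order completeness of $F$ then guarantees that the pointwise supremum exists and coincides with $\Phi$.

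First I would record the easy half. Let $\mathcal{A}$ denote the family of all continuous affine minorants of $\Phi$, i.e., all maps of the form $A(x)=T(x)+c$ with $T:E\rightarrow F$ continuous linear, $c\in F$, and $A(x)\leq\Phi(x)$ for every $x\in C$. By definition, $\Phi(x)$ is then an upper bound of $\{A(x):A\in\mathcal{A}\}$ for each fixed $x\in C$. Since $F$ is order complete and this set is bounded above, $\sup\{A(x):A\in\mathcal{A}\}$ exists in $F$ and is $\leq\Phi(x)$.

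For the reverse inequality, fix an arbitrary $x_{0}\in C$. Because $C$ is open, $x_{0}$ is an interior point, so Theorem \ref{thm1} applies and furnishes a subgradient $T\in\partial_{x_{0}}\Phi$, that is, a continuous linear operator $T:E\rightarrow F$ with
\[
\Phi(x)\geq\Phi(x_{0})+T(x-x_{0})\quad\text{for all }x\in C.
\]
Set $A_{0}(x)=\Phi(x_{0})+T(x-x_{0})$. Then $A_{0}$ is a continuous affine map dominated by $\Phi$ on $C$, hence $A_{0}\in\mathcal{A}$, and moreover $A_{0}(x_{0})=\Phi(x_{0})$. Consequently $\Phi(x_{0})=A_{0}(x_{0})\leq\sup\{A(x_{0}):A\in\mathcal{A}\}\leq\Phi(x_{0})$, which forces equality. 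Since $x_{0}\in C$ was arbitrary, $\Phi$ is the upper envelope of $\mathcal{A}$ (indeed, the envelope is attained at every point).

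There is no serious obstacle here: essentially all the content is delivered by Theorem \ref{thm1}. The only points meriting a word of care are that the openness of $C$ is exactly what lets us invoke that theorem at \emph{every} point of the domain (so no boundary points are left uncovered), that the continuity hypothesis on $\Phi$ enters through the same theorem, and that the order completeness of $F$ is precisely what makes the notion of an ``upper envelope'' meaningful in this vector-valued setting.
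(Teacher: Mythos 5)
Your proof is correct and follows exactly the intended route: the paper gives no separate argument for Corollary \ref{cor1} because it is the immediate consequence of Theorem \ref{thm1} that you spell out, namely that at each point of the open set $C$ the subgradient furnished by that theorem produces a continuous affine minorant touching $\Phi$ there, so the upper envelope is attained pointwise. (A minor stylistic remark: since the supremum is attained, it is in fact a maximum, so order completeness of $F$ is not strictly needed to make sense of the envelope — it is consumed entirely inside Theorem \ref{thm1}.)
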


Valadier's argument for the non emptiness of the subdifferential is based on
the existence of directional derivatives%
\[
\Phi^{\prime}(x_{0};h)=\lim_{t\rightarrow0+}\frac{\Phi(x_{0}+th)-\Phi(x_{0}%
)}{t},
\]
and the fact that a linear operator $T:E\rightarrow F$ is a subgradient of
$\Phi$ at $x_{0}$ if and only if $T(h)\leq\Phi^{\prime}(x_{0};h)$ for every
$h\in E.$ He assumed $F$ to be an order complete Banach lattice, a fact
relaxed by Zowe who used a generalization of the analytical form of the
Hahn-Banach Theorem stated below as Theorem \ref{thm2}.

The converse of Theorem \ref{thm1} also works: the only continuous functions
$\Phi:C\rightarrow F$ with the property that $\partial_{x_{0}}\Phi
\neq\emptyset$ at any interior point $x_{0}$ of $C$ are the convex ones. See
\cite{NP2018}, Theorem 3.3.6, p. 124.

\begin{theorem}
[The Generalized\emph{ }Hahn-Banach Extension Theorem]\label{thm2}Let $\Phi$
be a convex function defined on the real vector space $E$ and taking values in
an order complete vector space $F.$ If $H$ is a linear subspace of $E$ and
$T_{H}:H\rightarrow F$ is a linear operator satisfying $T_{H}(x)\leq\Phi(x)$
for all $x\in H$, then there exists a linear operator $T:E\rightarrow F$ which
extends $T_{H}$ and $T(x)\leq\Phi(x)$ for all $x\in E.$
\end{theorem}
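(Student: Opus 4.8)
The plan is to reduce the vector-valued Hahn--Banach extension to its classical scalar form via the duality tools of Lemma~\ref{lem1}, combined with a Zorn's lemma argument carried out one dimension at a time. First I would isolate the genuinely new content: the one-step extension. That is, suppose $H\subsetneq E$ is a linear subspace, $T_{H}\colon H\to F$ is linear with $T_{H}(x)\le\Phi(x)$ on $H$, and pick $x_{1}\in E\setminus H$; let $H_{1}=H\oplus\mathbb{R}x_{1}$. I must produce $z\in F$ so that setting $T_{1}(h+tx_{1})=T_{H}(h)+tz$ still satisfies $T_{1}\le\Phi$ on $H_{1}$. Expanding the convexity inequality for $\Phi$ at points of the form $\lambda(h'-x_{1})+(1-\lambda)(h''+x_{1})$ and splitting into the cases $t>0$ and $t<0$, the requirement on $z$ becomes the pair of inequalities
\[
T_{H}(h')-\Phi(h'-x_{1})\le z\le \Phi(h''+x_{1})-T_{H}(h'')
\qquad\text{for all }h',h''\in H,
\]
together with the (automatically satisfied) observation that every left-hand side is $\le$ every right-hand side, which follows from $T_{H}\le\Phi$ applied to the convex combination $\tfrac12(h'-x_{1})+\tfrac12(h''+x_{1})=\tfrac12(h'+h'')$.

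The crux is then to find such a $z$, i.e.\ to show that the set $L=\{T_{H}(h')-\Phi(h'-x_{1}):h'\in H\}$ has an upper bound dominated by a lower bound of $U=\{\Phi(h''+x_{1})-T_{H}(h''):h''\in H\}$. Since $F$ is order complete and $L$ is bounded above by any element of $U$, the supremum $z=\sup L$ exists in $F$, and by definition of supremum it also satisfies $z\le u$ for every $u\in U$. This is exactly where order completeness of $F$ is used and it is the only place; the inequality $\ell\le u$ for all $\ell\in L$, $u\in U$ guarantees $L$ is indeed bounded above so the supremum is legitimate. I expect this step --- verifying that the algebra of the convexity inequality reduces precisely to ``$\sup L\le\inf U$'' and that order completeness closes the gap --- to be the main (and essentially the only) obstacle; everything else is bookkeeping.

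Having established the one-step extension, I would finish by a standard Zorn's lemma argument: consider the poset of all pairs $(G,T_{G})$ where $G$ is a subspace containing $H$ and $T_{G}$ extends $T_{H}$ with $T_{G}\le\Phi$ on $G$, ordered by extension. Every chain has an upper bound (take the union of domains, with the obvious common extension), so there is a maximal element $(G_{0},T_{0})$; if $G_{0}\ne E$, the one-step extension contradicts maximality, hence $G_{0}=E$ and $T=T_{0}$ is the desired operator. I would remark that no continuity or topology enters: the statement is purely algebraic, with $E$ a real vector space and $F$ an order complete ordered vector space, so the proof is a verbatim transcription of the scalar Hahn--Banach proof with ``$\sup$ of a bounded-above set'' supplied by order completeness in place of the least upper bound property of $\mathbb{R}$. (For the connection to Theorem~\ref{thm1} via directional derivatives and sublinearity of $h\mapsto\Phi'(x_{0};h)$, I would simply cite Zowe~\cite{Zowe}.)
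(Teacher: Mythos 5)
Your overall architecture (Zorn's lemma over the poset of dominated extensions, plus a one\--step extension whose existence is settled by taking a supremum in the order complete space $F$) is exactly the paper's, but the one\--step extension as you set it up has a genuine gap: you have normalized the coefficient of $x_{1}$ to $\pm1$. The condition $T_{1}(h+tx_{1})\le\Phi(h+tx_{1})$ must hold for \emph{all} $t\neq0$, which for $t>0$ reads $z\le t^{-1}\bigl(\Phi(h+tx_{1})-T_{H}(h)\bigr)$ and for $t<0$ reads $z\ge t^{-1}\bigl(\Phi(h+tx_{1})-T_{H}(h)\bigr)$. These quotients reduce to your sets $U$ and $L$ only when $\Phi$ is positively homogeneous (the sublinear case of the classical Hahn--Banach theorem); for a merely convex $\Phi$ the constraints with $|t|\neq1$ are not consequences of those with $|t|=1$, so $z=\sup L$ can violate the domination. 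Concretely, take $E=F=\mathbb{R}$, $H=\{0\}$, $T_{H}=0$, $\Phi(x)=(x-10)^{2}$, $x_{1}=1$. Then $L=\{-\Phi(-1)\}=\{-121\}$ and $U=\{\Phi(1)\}=\{81\}$, so your recipe gives $z=-121$; but $T_{1}(-2)=242>\Phi(-2)=144$, whereas the full family of constraints forces $z\in[-40,0]$.

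The repair is exactly what the paper does: keep the two full families $\bigl\{\beta^{-1}(\Phi(h_{2}+\beta x_{1})-T_{H}(h_{2})):h_{2}\in H,\ \beta<0\bigr\}$ (lower bounds) and $\bigl\{\alpha^{-1}(\Phi(h_{1}+\alpha x_{1})-T_{H}(h_{1})):h_{1}\in H,\ \alpha>0\bigr\}$ (upper bounds), and verify compatibility not with the midpoint combination $\tfrac12(h'-x_{1})+\tfrac12(h''+x_{1})$ but with the weights $\frac{\alpha^{-1}}{\alpha^{-1}-\beta^{-1}}$ and $\frac{-\beta^{-1}}{\alpha^{-1}-\beta^{-1}}$ applied to $h_{1}+\alpha x_{1}$ and $h_{2}+\beta x_{1}$, whose convex combination lies back in $H$; then $z$ is the supremum of the lower family, legitimated by order completeness. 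With that correction your argument coincides with the paper's proof; the Zorn step and the closing remarks are fine as written.
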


\begin{proof}
Our argument is close (but simpler) to that of Zowe \cite{Zowe}, Theorem 2.1,
p. 284.

Consider the set $\mathcal{A}$ of all pairs $(K,T_{K})$ consisting of a vector
subspace $K\subset E$ and a linear operator $T_{K}:K\rightarrow F$ such that
$K\supset H,$ $T_{K}|_{H}=T_{H}$ and $T_{K}\leq\Phi.$ The set $\mathcal{A}$ is
nonempty because it contains at least the pair $(H,T_{H})$. Notice also that
$\mathcal{A}$ is inductively ordered with respect to the order relation
$\prec$ defined by%
\[
(K,T_{K})\prec(L,T_{L})\text{ }\Longleftrightarrow\text{ }K\subset L\text{ and
}T_{L}|_{K}=T_{K}.
\]
According to Zorn's Lemma, $\mathcal{A}$ admits a maximal element, say
$(H_{M},T_{H_{M}}).$ We have to prove that $H_{M}=E.$ Indeed, if $H_{M}$ is
strictly included in $E,$ then one can choose an element $v_{0}\in E\backslash
H_{M}$ to which we associate a pair $(H_{0},T_{0})$ where $H_{0}=H_{M}%
\oplus\mathbb{R}v_{0}$ and $T_{0}:H_{0}\rightarrow F$ is the linear operator
defined via the formula%
\[
T_{0}(h+\alpha v_{0})=T_{H_{M}}(h)+\alpha y_{0}\text{ for all }h\in
H_{M},\text{ }\alpha\in\mathbb{R};
\]
here $y_{0}$ is an element of $F$ that will be specified to obtain
$(H_{M},T_{H_{M}})\prec(H_{0},T_{0}),$ a condition that is equivalent to
\begin{equation}
T_{0}(h+\alpha v_{0})=T_{H_{M}}(h)+\alpha y_{0}\leq\Phi(h+\alpha v_{0})\text{
for all }h\in H_{M},\text{ }\alpha\in\mathbb{R}. \label{zeq}%
\end{equation}
Once this accomplished, we get a pair $(H_{0},T_{0})$ that contradicts the
maximality of $(H_{M},T_{H_{M}})$. Therefore $H_{M}=E$ and that will end the
proof of the theorem.

In order to prove the existence of $y_{0}$ notice first that the inequality
(\ref{zeq}) is equivalent to%
\[
y_{0}\leq\frac{\Phi(h+\alpha v_{0})-T_{H_{M}}(h)}{\alpha}\text{\quad for all
}h\in H_{M}%
\]
if $\alpha>0,$ and it is equivalent to
\[
y_{0}\geq\frac{\Phi(h+\beta v_{0})-T_{H_{M}}(h)}{\beta}\text{\quad for all
}h\in H_{M}%
\]
if $\beta<0.$ Therefore, taking into account the order completeness of $F,$
the existence of $y_{0}$ reduces to the fact that%
\[
\frac{\Phi(h_{2}+\beta v_{0})-T_{H_{M}}(h_{2})}{\beta}\leq\frac{\Phi
(h_{1}+\alpha v_{0})-T_{H_{M}}(h_{1})}{\alpha}%
\]
whenever $h_{1},h_{2}\in H_{M},$ $\alpha>0$ and $\beta<0.$ The last inequality
is equivalent to%
\begin{equation}
T_{H_{M}}(\alpha^{-1}h_{1}-\beta^{-1}h_{2})\leq\alpha^{-1}\Phi(h_{1}+\alpha
v_{0})-\beta^{-1}\Phi(h_{2}+\beta v_{0}) \label{eqint}%
\end{equation}
and (\ref{eqint}) follows from the definition of $T_{H_{M}}$ and the convexity
of $\Phi:$
\begin{multline*}
\frac{1}{\alpha^{-1}-\beta^{-1}}\left(  \alpha^{-1}T_{H_{M}}(h_{1})-\beta
^{-1}T_{H_{M}}(h_{2})\right) \\
=T_{H_{M}}\left(  \frac{\alpha^{-1}}{\alpha^{-1}-\beta^{-1}}h_{1}+\frac
{-\beta^{-1}}{\alpha^{-1}-\beta^{-1}}h_{2}\right) \\
\leq\Phi\left(  \frac{\alpha^{-1}}{\alpha^{-1}-\beta^{-1}}h_{1}+\frac
{-\beta^{-1}}{\alpha^{-1}-\beta^{-1}}h_{2}\right) \\
=\Phi\left(  \frac{\alpha^{-1}}{\alpha^{-1}-\beta^{-1}}(h_{1}+\alpha
v_{0})+\frac{-\beta^{-1}}{\alpha^{-1}-\beta^{-1}}(h_{2}+\beta v_{0})\right) \\
\leq\frac{\alpha^{-1}}{\alpha^{-1}-\beta^{-1}}\Phi(h_{1}+\alpha v_{0}%
)+\frac{-\beta^{-1}}{\alpha^{-1}-\beta^{-1}}\Phi(h_{2}+\beta v_{0}).
\end{multline*}
This ends the proof of Theorem \ref{thm2}.
\end{proof}

\begin{corollary}
[Topological version of Theorem 2]\label{cor2} Let $\Phi$ be a convex function
defined on the Banach space $E$ and taking values in an order complete Banach
space $F$ such that $\Phi(0)=0$ and $\Phi$ is continuous at the origin. If $H$
is a linear subspace of $E$ and $T_{H}:H\rightarrow F$ is a linear operator
satisfying the inequality $T_{H}(x)\leq\Phi(x)$ for all $x\in H$, then there
exists a continuous linear operator $T:E\rightarrow F$ which extends $T_{H}$
and verifies the inequality
\begin{equation}
T(x)\leq\Phi(x)\text{\quad for all }x\in E. \label{eqsdiff0}%
\end{equation}

\end{corollary}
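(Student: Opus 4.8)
The plan is to derive the corollary from the algebraic statement of Theorem \ref{thm2} and then upgrade the extension to a continuous operator using the behaviour of $\Phi$ near the origin together with the order structure of $F$. First I would apply Theorem \ref{thm2} to $\Phi$, $H$ and $T_{H}$ (that theorem only needs $\Phi$ convex and $F$ order complete) to obtain a linear operator $T:E\rightarrow F$ that extends $T_{H}$ and satisfies $T(x)\leq\Phi(x)$ for every $x\in E$. Thus (\ref{eqsdiff0}) already holds, and the whole remaining task is to show that this particular $T$ is continuous.

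Since continuity is preserved under passing to an equivalent norm, I would first renorm so that, by Namioka's theorem, $F$ becomes a regularly ordered Banach space; then Lemma \ref{lem1}$(a)$ supplies a constant $C>0$ such that every $z\in F$ can be written $z=u-v$ with $u,v\in F_{+}$ and $\left\Vert u\right\Vert ,\left\Vert v\right\Vert \leq C\left\Vert z\right\Vert $. The key observation is that applying $T\leq\Phi$ at both $x$ and $-x$ yields the two-sided bound
\[
-\Phi(-x)\leq T(x)\leq\Phi(x)\qquad\text{for all }x\in E,
\]
because $-T(x)=T(-x)\leq\Phi(-x)$.

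Next I would use $\Phi(0)=0$ and the continuity of $\Phi$ at the origin to pick $\delta>0$ with $\left\Vert \Phi(x)\right\Vert \leq1$ whenever $\left\Vert x\right\Vert \leq\delta$. For such an $x$, decompose $\Phi(x)=u_{1}-v_{1}$ and $\Phi(-x)=u_{2}-v_{2}$ with $u_{i},v_{i}\in F_{+}$ and all four norms $\leq C$, and set $w=u_{1}+u_{2}\in F_{+}$. Then $\Phi(x)\leq w$ and $\Phi(-x)\leq w$, so the displayed sandwich gives $-w\leq T(x)\leq w$, whence $\left\Vert T(x)\right\Vert \leq\left\Vert w\right\Vert \leq 2C$ (the regularly ordered norm makes this immediate; alternatively the monotonicity of the norm gives $\left\Vert T(x)\right\Vert \leq 3\left\Vert w\right\Vert $ from $0\leq T(x)+w\leq 2w$). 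Thus $T$ is bounded on the ball of radius $\delta$, so $\left\Vert T\right\Vert \leq 2C/\delta$ and $T$ is continuous.

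The only genuinely new content beyond Theorem \ref{thm2} is this boundedness argument, and within it the one trick worth isolating is the passage from the sandwich $-\Phi(-x)\leq T(x)\leq\Phi(x)$ to a single positive majorant $w$ of both $\Phi(x)$ and $\Phi(-x)$ via Lemma \ref{lem1}$(a)$; the rest is routine.
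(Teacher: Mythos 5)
Your proposal is correct and follows the same route as the paper: apply Theorem \ref{thm2} to get the linear extension $T$ with $T\leq\Phi$, and then deduce continuity from the two-sided bound $-\Phi(-x)\leq T(x)\leq\Phi(x)$. The only difference is in the last step: the paper disposes of it in one line by letting $x_{n}\rightarrow0$ and invoking the squeeze theorem (which tacitly uses the normality of the cone, i.e.\ that $0\leq u\leq v$ implies $\left\Vert u\right\Vert \leq\left\Vert v\right\Vert $), whereas you make this explicit and quantitative by renorming $F$ to be regularly ordered, using Lemma \ref{lem1}$(a)$ to build a common positive majorant $w$ of $\Phi(x)$ and $\Phi(-x)$, and extracting the bound $\left\Vert T\right\Vert \leq2C/\delta$; both are valid, and your version has the minor advantage of exhibiting exactly which property of the ordered Banach space $F$ the squeeze argument relies on.
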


\begin{proof}
Indeed, the existence of a linear operator $T:E\rightarrow F$ which extends
$T_{H}$ and verifies the inequality (\ref{eqsdiff0}) follows from Theorem
\ref{thm2}\emph{. }Since\emph{ }$T$ is linear, its continuity is equivalent to
the continuity at the origin. Taking into account that%
\[
-\Phi(-x)\leq T(x)\leq\Phi(x)\text{\quad for all }x\in E,
\]
for every sequence $x_{n}\rightarrow0$ we have $-\Phi(-x_{n})\leq T(x_{n}%
)\leq\Phi(x_{n})$ and the fact that $T(x_{n})\rightarrow0$ is a consequence of
the squeeze theorem.
\end{proof}

For $H=\{0\}$ and $\Phi$ replaced by $\Phi(x+x_{0})-\Phi(x_{0})$ where $x_{0}$
is arbitrarily fixed in $E,$ Corollary 2 yields the following particular case
of Theorem 1:

\begin{corollary}
\label{cor3}Let $\Phi$ be a convex function defined on the Banach space $E$
and taking values in an order complete Banach space $F.$ Then $\partial
_{x_{0}}\Phi\neq\emptyset$ at every point $x_{0}$ of continuity of $\Phi.$
\end{corollary}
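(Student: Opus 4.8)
The plan is to deduce this directly from Corollary~\ref{cor2} by a translation trick. First I would fix a point $x_{0}\in E$ at which $\Phi$ is continuous and introduce the translated function $\Psi(x)=\Phi(x+x_{0})-\Phi(x_{0})$. One checks immediately that $\Psi$ is again convex (translation of the argument and subtraction of a constant vector both preserve convexity), that $\Psi(0)=0$, and that $\Psi$ is continuous at the origin precisely because $\Phi$ is continuous at $x_{0}$. Thus $\Psi$ satisfies all the hypotheses of Corollary~\ref{cor2}.

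Next I would apply Corollary~\ref{cor2} to $\Psi$ with the trivial choice $H=\{0\}$ and $T_{H}:H\to F$ the zero operator; the required domination $T_{H}(x)\le\Psi(x)$ has to be verified only at $x=0$, where it reads $0\le\Psi(0)=0$, so it holds. The corollary then produces a continuous linear operator $T:E\to F$ extending $T_{H}$ with $T(x)\le\Psi(x)$ for every $x\in E$, that is,
\[
T(x)\le\Phi(x+x_{0})-\Phi(x_{0})\qquad\text{for all }x\in E.
\]

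Finally I would rewrite this inequality in terms of the original function: replacing $x$ by $y-x_{0}$ yields $\Phi(y)\ge\Phi(x_{0})+T(y-x_{0})$ for all $y\in E$, which is exactly the subgradient inequality (\ref{eqsdiff}) for the case $C=E$. Hence $T\in\partial_{x_{0}}\Phi$, so the subdifferential at $x_{0}$ is nonempty. There is no real obstacle here; the only point requiring a moment's care is checking that the normalization $\Psi(0)=0$ and the continuity of $\Psi$ at $0$ genuinely transfer from the continuity of $\Phi$ at $x_{0}$, after which the argument is purely formal.
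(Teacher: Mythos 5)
Your argument is correct and is exactly the route the paper itself takes: the sentence preceding the corollary states that it follows from Corollary~\ref{cor2} applied with $H=\{0\}$ and $\Phi$ replaced by $\Phi(x+x_{0})-\Phi(x_{0})$, which is precisely your translation trick. You have merely written out the details the paper leaves implicit, and they check out.
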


\begin{remark}
\label{rem1}As was noticed in the proof of Corollary \emph{\ref{cor2}, }an
inequality of the form $T\leq\Phi$ implies the continuity of $T$ on $E$ when
$T:E\rightarrow F$ is a linear operator and $\Phi:E\rightarrow F$ is a convex
function such that $\Phi(0)=0$ and $\Phi$ is continuous at the origin.
However, the continuity of $T$ also holds by assuming that $E$ is an ordered
Banach space and $\Phi$ is an isotone function such that $T\leq\Phi$
\emph{(}the properties of\emph{ }convexity and continuity of $\Phi$ being
unnecessary in this case\emph{)}. Indeed,%
\[
nT(-x)=T(-nx)\leq\Phi(-nx)\leq\Phi(0)\text{\quad for all }n\in\mathbb{N}\text{
and }x\in E_{+}%
\]
since $\Phi$ is isotone. Taking into account that any ordered Banach space is
Archime-dean \emph{(}see \emph{\cite{SW1999}}, Remark \emph{19} $(e)$, p.
\emph{254)}, we infer that $T(-x)\leq0$ for all $x\in E_{+},$ that is, $T$ is
a positive operator. The continuity of $T$ is now a consequence of Lemma\emph{
\ref{lem1*}}.
\end{remark}

The argument of Theorem 2 can be suitably adapted (see also \cite{Olt1978} and
\cite{Olt1983}) to prove the following variant of it:

\begin{theorem}
\label{thm3}Let $E$ be an ordered vector space, $F$ an order complete vector
space, $H\subset E$ a vector subspace and $T_{1}:H\rightarrow F$ a linear
operator. If there exists a convex function $\Phi:E_{+}\rightarrow F$ such
that $T_{1}(h)\leq\Phi(x)$ for all $(h,x)\in H\times E_{+}$ such that $h\leq
x,$ then $T_{1}$ admits a positive linear extension $T:E\rightarrow F$ such
that $T|_{E_{+}}\leq\Phi.$
\end{theorem}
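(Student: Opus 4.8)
The plan is to deduce the result from the Generalized Hahn--Banach Extension Theorem (Theorem \ref{thm2}), applied not to $\Phi$ itself but to a convex majorant of $\Phi$ built on the whole space $E$.

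First I would extract from the hypothesis a sign condition that is hidden in it: taking $h=0\in H$ (and using $0\le x$ for every $x\in E_{+}$) the domination inequality gives $0=T_{1}(0)\le\Phi(x)$ for all $x\in E_{+}$, so $\Phi\ge 0$ on $E_{+}$. Then, for $v\in E$, set
\[
\widetilde{\Phi}(v)=\inf\{\,\Phi(x):x\in E_{+},\ x\ge v\,\}.
\]
This infimum is taken over a nonempty set, because $E_{+}$ is generating (if $v=a-b$ with $a,b\in E_{+}$, then $a\in E_{+}$ and $a\ge v$), and the set is bounded below by $0$ by the previous step; since $F$ is order complete, $\widetilde{\Phi}(v)$ is a well-defined element of $F$. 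Moreover $\widetilde{\Phi}$ is convex: given $x_{u}\ge u$ and $x_{v}\ge v$ in $E_{+}$ and $\lambda\in(0,1)$, the element $\lambda x_{u}+(1-\lambda)x_{v}$ lies in $E_{+}$ and dominates $\lambda u+(1-\lambda)v$, so
\[
\widetilde{\Phi}(\lambda u+(1-\lambda)v)\le\Phi(\lambda x_{u}+(1-\lambda)x_{v})\le\lambda\Phi(x_{u})+(1-\lambda)\Phi(x_{v}),
\]
and since the left-hand side is a lower bound for the right-hand side over all admissible $x_{u},x_{v}$, passing to the infimum successively in $x_{u}$ and in $x_{v}$ (using that multiplication by a positive scalar and translation by a fixed element commute with infima in $F$) gives $\widetilde{\Phi}(\lambda u+(1-\lambda)v)\le\lambda\widetilde{\Phi}(u)+(1-\lambda)\widetilde{\Phi}(v)$; the cases $\lambda\in\{0,1\}$ are trivial.

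Next, the hypothesis says precisely that for each $h\in H$ the element $T_{1}(h)$ is a lower bound of $\{\Phi(x):x\in E_{+},\ x\ge h\}$, i.e. $T_{1}(h)\le\widetilde{\Phi}(h)$. Hence Theorem \ref{thm2}, applied to the convex function $\widetilde{\Phi}:E\to F$, the subspace $H$, and the operator $T_{1}$, produces a linear operator $T:E\to F$ extending $T_{1}$ with $T\le\widetilde{\Phi}$ on $E$. It remains to read off the two asserted properties. For $x\in E_{+}$ the point $x$ is itself admissible in the infimum defining $\widetilde{\Phi}(x)$, so $T(x)\le\widetilde{\Phi}(x)\le\Phi(x)$; this is $T|_{E_{+}}\le\Phi$. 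For positivity, fix $x\in E_{+}$; for every $n\in\mathbb{N}$ we have $-nx\le 0$, so $0\in E_{+}$ is admissible in the infimum defining $\widetilde{\Phi}(-nx)$, whence $n\,T(-x)=T(-nx)\le\widetilde{\Phi}(-nx)\le\Phi(0)$. Every order complete vector space is Archimedean, so $n\,T(-x)\le\Phi(0)$ for all $n$ forces $T(-x)\le 0$, i.e. $T(x)\ge 0$; as $x\in E_{+}$ was arbitrary, $T$ is positive.

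The one point that needs care is the well-definedness of $\widetilde{\Phi}$, and it is exactly there that the (implicit) positivity of $\Phi$ on $E_{+}$ together with the order completeness of $F$ are used; everything else then reduces to Theorem \ref{thm2}. An alternative, closer to the phrase ``the argument of Theorem 2 can be suitably adapted'', is to run Zorn's lemma directly on the pairs $(K,T_{K})$ with $H\subseteq K\subseteq E$, $T_{K}|_{H}=T_{1}$ and $T_{K}(h)\le\Phi(x)$ whenever $h\in K$, $x\in E_{+}$, $h\le x$; in the one-step extension $H_{0}=H_{M}\oplus\mathbb{R}v_{0}$ one must choose $y_{0}\in F$ so that $T_{M}(h)+\alpha y_{0}\le\Phi(x)$ for all $h\in H_{M}$, $\alpha\in\mathbb{R}$ and $x\in E_{+}$ with $h+\alpha v_{0}\le x$, and the compatibility of the constraints arising from $\alpha>0$ and from $\alpha<0$ is established exactly as in Theorem \ref{thm2}: one forms a genuine convex combination of the two constraint triples and uses the linearity of $T_{M}$ to absorb the leftover positive factor. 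That last manoeuvre is the step to watch, since choosing the witness naively instead leads to an inequality of sublinear type for $\Phi$, which fails for a general convex function.
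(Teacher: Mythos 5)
Your proof is correct, but it takes a genuinely different route from the one the paper intends: the paper gives no written proof of Theorem \ref{thm3}, saying only that ``the argument of Theorem \ref{thm2} can be suitably adapted'' (with pointers to Olteanu's papers) --- that is, a direct Zorn's lemma induction on pairs $(K,T_K)$ carrying along the constraint $T_K(h)\leq\Phi(x)$ whenever $h\leq x$, which is exactly your second, alternative sketch, and your description of the one-step extension there (normalizing the convex combination before applying convexity of $\Phi$) is accurate. Your primary argument instead reduces the statement to Theorem \ref{thm2} as a black box: you extract $\Phi\geq 0$ on $E_+$ from the case $h=0$, form the isotone convex envelope $\widetilde{\Phi}(v)=\inf\{\Phi(x):x\in E_+,\ x\geq v\}$ on all of $E$ (well defined because the paper's standing assumption makes $E_+$ generating, the set of values is minorized by $0$, and $F$ is order complete), verify $T_1\leq\widetilde{\Phi}$ on $H$, and read off $T|_{E_+}\leq\Phi$ and positivity at the end; each step checks out, including the double passage to the infimum in the convexity proof (translation and positive scaling commute with infima in an ordered vector space) and the Archimedean property of an order complete space used to get $T(-x)\leq 0$ from $nT(-x)\leq\Phi(0)$, in the same spirit as Remark \ref{rem1}. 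What your reduction buys is a short, self-contained proof within the paper that isolates where the domination hypothesis enters (only in $T_1\leq\widetilde{\Phi}$ on $H$) and exhibits $T$ as dominated by an isotone convex function on the whole space, from which positivity is automatic; the paper's direct adaptation avoids introducing $\widetilde{\Phi}$ but must re-run the whole Hahn--Banach/Zorn machinery with the extra order constraint.
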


It is worth noticing that the hypothesis concerning the order completeness of
the codomain $F$ is not only sufficient for the validity of Theorem 2, but it
is also necessary (in the class of ordered vector spaces). See Buskes
\cite{Bus1993} for a survey on this challenging result due to Bonnice,
Silvermann and To. It is natural to ask whether Theorem 1 admits a similar companion.

\begin{problem}
Suppose that $F$ is an ordered Banach space such that $\partial_{x_{0}}%
\Phi\neq\emptyset$ for all continuous convex functions $\Phi$ $($from an
arbitrary Banach space $E$ to $F)$ and all $x_{0}\in E.$ Is $F$ necessarily an
order complete vector space$?$
\end{problem}

A particular class of convex functions is that of sublinear operators. Suppose
that $E$ is a vector space and $F$ is an ordered vector space. A map
$P:E\rightarrow F$ is called a \emph{sublinear operator} if it is subadditive
and positively homogeneous, that is,%
\[
P(x+y)\leq P(x)+P(y)\text{ and }P(\lambda x)=\lambda P(x)
\]
for all $x,y\in E$ and $\lambda\geq0$. Every sublinear operator is convex and
a convex function $\Phi:E\rightarrow F$ is sublinear if and only if it is
positively homogeneous. In the topological context, the continuity of a
sublinear operator $P:E\rightarrow F$ is equivalent to its continuity at the
origin, which in turn is equivalent to existence of a constant $\lambda\geq0$
such that%
\[
\left\Vert P\left(  x\right)  \right\Vert \leq\lambda\left\Vert x\right\Vert
\text{\quad for all }x\in E.
\]
The smallest constant $\lambda=\left\Vert P\right\Vert $ with this property
will be called the norm of $P.$ The set $\mathcal{SL}\left(  E,F\right)  ,$ of
all continuous sublinear operators includes the Banach space $\mathcal{L}%
\left(  E,F\right)  ,$ of all linear and continuous operators from $E$ to $F,$
and is included (as a closed convex cone) in the Banach space
$\operatorname*{Lip}_{0}\left(  E,F\right)  ,$ of \ all Lipschitz maps from
$E$ to $F$ that vanish at the origin.

We end this section by noticing that in the case of continuous sublinear
operators $P:E\rightarrow F$ the subdifferential at the origin,
\[
\partial P=\partial_{0}P=\{T\in\mathcal{L}\left(  E,F\right)  :T\leq P\}
\]
also called the \emph{support }of\emph{ }$P,$ plays a special role in
emphasizing the various properties of $P.$

Here is a simple example concerning the case of an order complete Banach
lattice $E$ and the sublinear operator
\[
P:E\rightarrow E,\text{ }P(x)=\left\vert x\right\vert .
\]
The support of $P$ consists of all continuous linear operators $T\in
\mathcal{L}\left(  E,E\right)  $ such that $\left\vert T(x)\right\vert
\leq\left\vert x\right\vert $ for $x\in E.$ One can easily prove that
$\partial P$ also equals the set of all regular operators $T:E\rightarrow E$
whose absolute value $\left\vert T\right\vert $ is dominated by the identity
$I$ of $E.$ Indeed,%
\[
\left\vert T\right\vert (x)=\sup_{\left\vert y\right\vert \leq x}\left\vert
T(y)\right\vert \text{\quad for all }x\in E_{+}.
\]
See \cite{MN}, Theorem 1.3.2, p. 24. For every vector $x_{0}\neq\pm\left\vert
x_{0}\right\vert $ in $E,$ and every operator $T\in\partial_{x_{0}}P$ we have
$T(x_{0})=\left\vert x_{0}\right\vert $ and $T(x)\leq\left\vert x\right\vert $
for all $x.$ Then $T(x_{0}^{+})-T(x_{0}^{-})=x_{0}^{+}+x_{0}^{-},$ whence%
\[
0\leq x_{0}^{+}-T(x_{0}^{+})=T(-x_{0}^{-})-x_{0}^{-}=T(-x_{0}^{-})-\left\vert
-x_{0}^{-}\right\vert \leq0,
\]
that is, $x_{0}^{+}=T(x_{0}^{+})$ and $-x_{0}^{-}=T(x_{0}^{-}).$ As a
consequence, $-1$ and $1$ are eigenvalues to which correspond positive
eigenvectors. Since $T\left(  \left\vert x_{0}\right\vert \right)  =x_{0}$ and
$T(x_{0})=\left\vert x_{0}\right\vert ,$ each operator $T\in\partial_{x_{0}}P$
admits also a periodic orbit $\{x_{0},\left\vert x_{0}\right\vert \}$ of
length \emph{2}.

The dynamics of linear operators belonging to the support of a continuous
sublinear operator can be rather intricate due to the existence of points with
dense orbits. So is the case of \emph{Ces\`{a}ro operator},%
\[
\operatorname*{Ces}:L^{p}(0,1)\rightarrow L^{p}(0,1),\text{\quad}\left(
\operatorname*{Ces}f\right)  (x)=\frac{1}{x}\int_{0}^{x}f(t)\mathrm{d}t,
\]
whose boundedness for $p\in(1,\infty)$ follows from Hardy's inequality. See
Bayart and Matheron \cite{BM}, Exercise 1.2, p. 28. This positive linear
operator belongs to the support of the continuous sublinear operator%
\[
\left(  Pf\right)  (x)=\left\vert \frac{1}{x}\int_{0}^{x}f(t)\mathrm{d}%
t\right\vert .
\]

\section{A characterization of the isotone convex functions}

A classical result due to Stolz asserts that every real-valued convex function
$f$ defined on an open real interval $I$ admits finite sided derivatives and
in addition%
\[
f_{-}^{\prime}(x)\leq f_{+}^{\prime}(x)\leq\frac{f(y)-f(x)}{y-x}\leq
f_{-}^{\prime}(y)\leq f_{+}^{\prime}(y)
\]
for all $x<y$ in $I;$ see \cite{NP2018}, Theorem 1.4.2, p. 25. The subgradient
inequality yields the following generalization:

\begin{lemma}
\label{lem3}Suppose that $C$ is an open convex subset of a Banach space $E,$
$F$ is an order complete Banach space and $\Phi:C\rightarrow F$ is a
continuous convex function. Then for every pair of points $x$ and $y$ in $C$
and every operators $T\in\partial_{x}\Phi$ and $S\in\partial_{y}\Phi$ we have%
\[
T(y-x)\leq\Phi(y)-\Phi(x)\leq S(y-x).
\]

\end{lemma}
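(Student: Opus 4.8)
The plan is to read off the double inequality directly from the defining subgradient inequality (\ref{eqsdiff}), applied at the two points $x$ and $y$ with the roles of the running variable interchanged. No appeal to directional derivatives or to the machinery behind Theorem \ref{thm1} is needed here; the statement is a formal consequence of the definitions of $\partial_{x}\Phi$ and $\partial_{y}\Phi$ together with linearity of their members.

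First I would exploit that $T\in\partial_{x}\Phi$: by (\ref{eqsdiff}) with $x_{0}=x$ we have $\Phi(z)\geq\Phi(x)+T(z-x)$ for every $z\in C$, and specializing to the admissible choice $z=y$ gives $T(y-x)\leq\Phi(y)-\Phi(x)$, which is the left-hand inequality. Next I would use that $S\in\partial_{y}\Phi$: by (\ref{eqsdiff}) with $x_{0}=y$ we have $\Phi(z)\geq\Phi(y)+S(z-y)$ for every $z\in C$, and specializing to $z=x$ yields $S(x-y)\leq\Phi(x)-\Phi(y)$; since $S$ is linear this reads $-S(y-x)\leq\Phi(x)-\Phi(y)$, i.e. $\Phi(y)-\Phi(x)\leq S(y-x)$, the right-hand inequality. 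Concatenating the two bounds completes the argument.

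There is no genuine obstacle: the only points worth stressing are that both substitutions are legitimate because $C$, being convex, contains $x$ and $y$ (and, being open, every point makes them interior, so by Corollary \ref{cor3} the sets $\partial_{x}\Phi$ and $\partial_{y}\Phi$ are in fact nonempty, hence the statement is not vacuous), and that it is precisely the \emph{linearity} of the subgradient $S$ that converts the inequality obtained at $y$ into an upper estimate for $\Phi(y)-\Phi(x)$. The asserted inequalities then hold for an arbitrary choice of $T\in\partial_{x}\Phi$ and $S\in\partial_{y}\Phi$.
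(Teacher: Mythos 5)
Your proof is correct and is exactly the argument the paper intends: the lemma is presented as an immediate consequence of the subgradient inequality (\ref{eqsdiff}) applied at $x_{0}=x$ with $z=y$ and at $x_{0}=y$ with $z=x$, and the paper gives no further proof beyond that remark. Your additional observations about the legitimacy of the substitutions and the nonemptiness of the subdifferentials are accurate but not needed.
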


This enables us to characterize the isotonicity of continuous convex functions
defined on the whole space in a similar way to the case of differentiable
functions defined on real intervals.

\begin{theorem}
\label{thm4}Suppose that $E$ is an ordered Banach space$,$ $F$ is an order
complete Banach space and $\Phi:E\rightarrow F$ is a continuous convex
function. Then the following statements are equivalent:

$(a)$ $x\leq y$ in $E$ implies $\Phi(x)\leq\Phi(y);$

$(b)$ the subdifferential of $\Phi$ at any point $x_{0}\in E$ consists of
positive linear operators$;$

$(c)$ for each $x_{0}\in E,$ there exists a positive linear operator
$T\in\partial_{x_{0}}\Phi.$
\end{theorem}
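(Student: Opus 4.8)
The plan is to establish the cycle of implications $(a)\Rightarrow(b)\Rightarrow(c)\Rightarrow(a)$, mimicking the classical one-variable criterion but replacing derivatives by subgradients. The implication $(b)\Rightarrow(c)$ is immediate from Theorem~\ref{thm1} (equivalently Corollary~\ref{cor3}): since $\Phi$ is a continuous convex function on all of $E$, every point $x_0$ is an interior point of the domain, so $\partial_{x_0}\Phi$ is nonempty, and by $(b)$ any element of it is a positive operator.

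For $(a)\Rightarrow(b)$, fix $x_0\in E$ and $T\in\partial_{x_0}\Phi$. I would use the subgradient inequality $\Phi(x)\ge\Phi(x_0)+T(x-x_0)$ for all $x\in E$. Take any $z\in E_+$ and plug in $x=x_0-nz$ for $n\in\mathbb{N}$; since $x_0-nz\le x_0$, isotonicity gives $\Phi(x_0-nz)\le\Phi(x_0)$, hence
\[
\Phi(x_0)\ge\Phi(x_0-nz)\ge\Phi(x_0)-nT(z),
\]
so $nT(z)\ge\Phi(x_0)-\Phi(x_0-nz)\ge 0$... more simply, $\Phi(x_0)\ge\Phi(x_0)-nT(z)$ forces $T(-z)\le 0$ for all $n$ — actually one gets $T(z)\ge \tfrac1n(\Phi(x_0)-\Phi(x_0-nz))$, which is not quite enough by itself. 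The cleaner route, exactly as in Remark~\ref{rem1}, is: from $\Phi(x_0)+T(x-x_0)\le\Phi(x)$ with $x=x_0-nz$ we get $-nT(z)\le\Phi(x_0-nz)-\Phi(x_0)\le 0$, so $T(z)\ge 0$ after dividing by $n$; wait, $-nT(z)\le 0$ gives $T(z)\ge 0$ directly. Thus $T$ is positive. (If one prefers to avoid the Archimedean subtlety, one can instead invoke Lemma~\ref{lem3}: for $x\le y$ and $T\in\partial_x\Phi$ we have $T(y-x)\le\Phi(y)-\Phi(x)$, and letting $y-x$ range over $E_+$ while using $x\le y$... this still needs the translation trick, so the direct computation above is the way to go.)

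For $(c)\Rightarrow(a)$, suppose $x\le y$ in $E$ and pick a positive operator $T\in\partial_x\Phi$ guaranteed by $(c)$. The subgradient inequality at $x$ gives $\Phi(y)\ge\Phi(x)+T(y-x)$. Since $y-x\in E_+$ and $T$ is positive, $T(y-x)\ge 0$, hence $\Phi(y)\ge\Phi(x)$, which is exactly $(a)$. This closes the loop. The main obstacle is really only in $(a)\Rightarrow(b)$: one must be careful that deducing $T(z)\ge 0$ from the family of inequalities indexed by $n$ uses either a direct sign argument after dividing by $n$, or the Archimedean property of ordered Banach spaces (cited in Remark~\ref{rem1} from \cite{SW1999}); everything else is a one-line application of the subgradient inequality and the non-emptiness of the subdifferential from Theorem~\ref{thm1}.
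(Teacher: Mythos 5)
Your proposal is correct and follows the same cycle $(a)\Rightarrow(b)\Rightarrow(c)\Rightarrow(a)$ as the paper: the implications $(b)\Rightarrow(c)$ and $(c)\Rightarrow(a)$ coincide with the paper's (the former with the useful explicit observation that nonemptiness of $\partial_{x_{0}}\Phi$ comes from Corollary \ref{cor3}, the latter being exactly the left-hand inequality of Lemma \ref{lem3}). Where you genuinely diverge is $(a)\Rightarrow(b)$, and your version is in fact more elementary than the paper's. The paper tests the subgradient inequality at the points $-nx$ with $x\in E_{+}$, obtaining $nT(-x)\leq\Phi(0)-\Phi(x_{0})+T(x_{0})$, i.e.\ an upper bound by a fixed but not necessarily negative element of $F$; it must then invoke the Archimedean property of ordered Banach spaces to pass from ``bounded above uniformly in $n$'' to $T(-x)\leq0$. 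You instead test at $x_{0}-nz$, which exploits the fact that the affine minorant touches $\Phi$ at $x_{0}$: isotonicity gives $\Phi(x_{0}-nz)\leq\Phi(x_{0})$, the subgradient inequality gives $\Phi(x_{0}-nz)\geq\Phi(x_{0})-nT(z)$, and combining yields $-nT(z)\leq0$, hence $T(z)\geq0$ because $F_{+}$ is stable under multiplication by $1/n$. No Archimedean argument is needed, and in fact no $n$ is needed either: the case $n=1$ already suffices, so your momentary hesitation (``which is not quite enough by itself'') is unfounded --- the inequality $nT(z)\geq\Phi(x_{0})-\Phi(x_{0}-nz)\geq0$ that you first wrote down is already conclusive. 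The only suggestion is to delete the false starts and state the one-line computation with $n=1$; the argument as it stands is sound and slightly cleaner than the published one.
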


\begin{proof}
$(a)\Rightarrow(b)$ If $T\in\partial_{x_{0}}\Phi,$ then
\[
T(x)\leq\Phi(x)-\Phi(x_{0})+T(x_{0})\text{\quad for all }x\in E.
\]
Since $\Phi$ is supposed to be isotone, we have%
\[
nT(-x)=T(-nx)\leq\Phi(-nx)-\Phi(x_{0})+T(x_{0})\leq\Phi(0)-\Phi(x_{0}%
)+T(x_{0})
\]
for all $n\in\mathbb{N}$ and $x\in E_{+}.$ Since any ordered Banach space is
Archimedean, this implies $T(-x)\leq0$ and thus $T$ is a positive operator.

The implication $(b)\Rightarrow(c)$ is trivial, while the implication
$(c)\Rightarrow(a)$ follows from the left-hand side inequality in Lemma
\ref{lem3}.
\end{proof}

The isotonicity on the positive cone admits a similar characterization. We
shall need the following result about controlled regularity.

\begin{lemma}
\label{lem4}Suppose that $E$ is an ordered vector space, $F$ is an order
complete vector lattice and $\Psi:E_{+}\rightarrow F$ is a convex function.
Then for every linear operator $S:E\rightarrow F$ the following two assertions
are equivalent:

$(a)$ there exist two positive linear operators $T,U:E\rightarrow F$ such that
$S=T-U$ and $T|_{E_{+}}\leq\Psi;$

$(b)$ $S(x_{1})\leq\Psi(x_{2})$ for all $x_{1},x_{2}\in E$ such that $0\leq
x_{1}\leq x_{2}.$
\end{lemma}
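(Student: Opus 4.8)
The plan is to prove Lemma~\ref{lem4} by establishing the two implications separately, with the bulk of the work lying in $(b)\Rightarrow(a)$.

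The implication $(a)\Rightarrow(b)$ is the easy direction. Suppose $S=T-U$ with $T,U$ positive and $T|_{E_{+}}\leq\Psi$. If $0\leq x_{1}\leq x_{2}$, then $U(x_{1})\geq 0$ since $U$ is positive and $x_{1}\in E_{+}$, so $S(x_{1})=T(x_{1})-U(x_{1})\leq T(x_{1})$. Moreover $x_{1}\leq x_{2}$ with both in $E_{+}$ gives $T(x_{1})\leq T(x_{2})$ by positivity of $T$, and finally $T(x_{2})\leq\Psi(x_{2})$ because $x_{2}\in E_{+}$. Chaining these yields $S(x_{1})\leq\Psi(x_{2})$, as required.

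For $(b)\Rightarrow(a)$, the natural route is to invoke Theorem~\ref{thm3} with a judicious choice of the linear operator on a subspace. First I would take $H=E$ and $T_{1}=S$ in the setup of Theorem~\ref{thm3}: the hypothesis of that theorem demands $S(h)\leq\Psi(x)$ for all $(h,x)\in E\times E_{+}$ with $h\leq x$. However, assertion $(b)$ only gives this when \emph{also} $0\leq h$, so a direct application is not immediately available; this is the main obstacle, and circumventing it is the heart of the argument. The standard fix is to replace $\Psi$ by its ``sublinearization'' relative to the cone order, i.e. to work with the convex function $\widetilde{\Psi}:E_{+}\to F$ defined by taking, for each $h\in E$, the relevant infimum $\inf\{\Psi(x):x\in E_{+},\ x\geq h,\ x\geq h'\text{-type constraints}\}$—more precisely one shows that the function $h\mapsto\Psi(h^{+})$ or an infimal-convolution-type modification still satisfies the domination hypothesis of Theorem~\ref{thm3} while agreeing with $\Psi$ on $E_{+}$; here the lattice structure of $F$ (as opposed to mere order completeness) is used essentially, to form suprema/infima of the relevant families and to split operators into positive and negative parts. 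Once Theorem~\ref{thm3} produces a positive linear extension $T:E\to F$ with $T|_{E_{+}}\leq\Psi$ and $T$ dominating $S$ in the appropriate sense, I would set $U=T-S$ and check that $U$ is positive: for $x\in E_{+}$, apply the defining inequality of $T$ together with $(b)$ (taking $x_{1}=x_{2}=x$, or $x_{1}=0$) to conclude $U(x)=T(x)-S(x)\geq 0$. Then $S=T-U$ is the desired decomposition.

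I expect the delicate point to be the precise formula for the auxiliary convex function and the verification that it inherits convexity on $E_{+}$ and still dominates $S$ in the mixed sense $S(h)\leq\widetilde{\Psi}(x)$ whenever $h\leq x$ in $E$ with $x\in E_{+}$. The inequality $(b)$ must be leveraged exactly at the points where $h$ fails to be positive, and the lattice operations in $F$ are what allow one to ``repair'' $\Psi$ there; I would phrase this as: for $h\leq x$ with $x\in E_{+}$, decompose using $0\leq x$ and the given constraint, and estimate $S(h)$ by $S(h^{+})-S(h^{-})\leq S(h^{+})\leq\Psi(x)$ after noting $0\leq h^{+}$—but since $E$ need not be a lattice, the clean way is instead to feed $S$ directly into a Hahn--Banach-type extension (Theorem~\ref{thm3}) against the modified sublinear dominating functional $p(h)=\inf\{\Psi(x):x\in E_{+},\ x\geq h\}$, whose positive homogeneity and subadditivity follow from convexity of $\Psi$ and the cone structure, and which is finite precisely because of $(b)$. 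This done, the extension $T$ of $S$ dominated by $p$ is automatically positive (as $p(-x)\leq 0$ for $x\in E_{+}$) and satisfies $T|_{E_{+}}\leq p\leq\Psi$, completing the proof.
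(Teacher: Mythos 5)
Your direction $(a)\Rightarrow(b)$ is correct and is the easy half. Note first that the paper itself offers no argument for this lemma: it simply writes ``For details, see \cite{Olt1983}'', so there is no in-paper proof to compare against. Your $(b)\Rightarrow(a)$, however, contains a genuine gap. The final concrete plan --- feed $S$ with $H=E$ into Theorem \ref{thm3} against $p(h)=\inf\{\Psi(x):x\in E_{+},\ x\geq h\}$ --- cannot work. With $H=E$ there is nothing to extend: the ``extension'' of $S$ is $S$ itself, so the conclusion would force $S$ to be a positive operator dominated by $\Psi$ on $E_{+}$, which is false in general (take $E=F=\mathbb{R}$, $\Psi\equiv 0$, $S=-\mathrm{id}$: then $(b)$ holds but $S$ is not positive; the whole point of the lemma is that only the difference decomposition survives). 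Moreover, the domination $S\leq p$ on all of $E$ that this application would require is exactly the inequality $S(h)\leq\Psi(x)$ for arbitrary $h\leq x$ with $x\in E_{+}$ --- precisely the information you correctly observed $(b)$ does not supply; taking the infimum over $x\geq h$ does not repair this, because the problematic comparison still involves $S$ evaluated at non-positive $h$. (A minor further point: $p$ so defined is convex but not positively homogeneous, so it is not sublinear; this is harmless for Theorem \ref{thm2}, but the claim is wrong as stated.)

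The deeper missing idea is that nothing in your construction forces $T\geq S$ on $E_{+}$, which is what positivity of $U=T-S$ means. Your proposed verification --- apply the defining inequality of $T$ together with $(b)$ with $x_{1}=x_{2}=x$ or $x_{1}=0$ --- only yields $T(x)\leq\Psi(x)$ and $S(x)\leq\Psi(x)$, from which $T(x)\geq S(x)$ does not follow: for instance $T=0$ is always positive and dominated by $\Psi$ on $E_{+}$ (since $(b)$ with $x_{1}=0$ gives $\Psi\geq 0$ there), yet $U=-S$ need not be positive (e.g.\ $E=F=\mathbb{R}$, $\Psi(x)=x$, $S=\mathrm{id}$). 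What is actually needed is a two-sided (sandwich) extension: introduce $Q(x)=\sup\{S(y):y\in E_{+},\ y\leq x\}$ on $E_{+}$, a superlinear function whose values exist in the order complete lattice $F$ because $(b)$ bounds the relevant sets above by $\Psi(x)$; observe that $(b)$ says exactly $Q\leq\Psi$ on $E_{+}$, while by construction $Q\geq 0$ and $Q\geq S$ there; then invoke a Mazur--Orlicz/Olteanu-type sandwich theorem to produce a linear $T:E\rightarrow F$ with $Q\leq T\leq\Psi$ on $E_{+}$. Such a $T$ is automatically positive, satisfies $T|_{E_{+}}\leq\Psi$, and majorizes $S$ on $E_{+}$, so $U=T-S$ is positive. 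This two-constraint extension is the content of the cited reference \cite{Olt1983} and is not obtainable from the one-sided Theorem \ref{thm3} in the way you propose.
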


For details, see \cite{Olt1983}.

\begin{theorem}
\label{thm5}Suppose that $E$ is an ordered Banach space, $F$ is an order
complete Banach lattice and $\Phi:E_{+}\rightarrow F$ is a continuous convex
function. If the interior $\operatorname*{int}E_{+}$ of the positive cone of
$E$ is nonempty, $\Phi$ is isotone on $\operatorname*{int}E_{+}$ if and only
if the subdifferential of $\Phi$ at each point $x_{0}\in\operatorname*{int}%
E_{+}$ contains a positive linear operator.
\end{theorem}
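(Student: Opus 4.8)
The plan is to prove the two implications separately, with essentially all of the work going into the ``only if'' direction; the ``if'' direction is a one-line consequence of the subgradient inequality.

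For sufficiency, suppose that $\partial_{x_{0}}\Phi$ contains a positive linear operator for every $x_{0}\in\operatorname*{int}E_{+}$. Given $x\leq y$ with $x,y\in\operatorname*{int}E_{+}$, I would pick a positive $T\in\partial_{x}\Phi$; since $y\in E_{+}$, the subgradient inequality yields $\Phi(y)\geq\Phi(x)+T(y-x)$, and $T(y-x)\geq0$ because $y-x\in E_{+}$ and $T$ is positive. Hence $\Phi(y)\geq\Phi(x)$, so $\Phi$ is isotone on $\operatorname*{int}E_{+}$.

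For necessity, fix $x_{0}\in\operatorname*{int}E_{+}$. First I would apply Theorem \ref{thm1} with $C=E_{+}$ to get that $\partial_{x_{0}}\Phi$ is nonempty (this is where the order completeness of $F$ enters; recall also that every operator in $\partial_{x_{0}}\Phi$ is continuous by the very definition of a subgradient, so no separate continuity check is needed afterwards). Now choose any $T\in\partial_{x_{0}}\Phi$; the claim is that $T$ is automatically positive. To check this, let $h\in E_{+}$. Since $x_{0}$ is an interior point of $E_{+}$, there is some $s>0$ with $x_{0}-sh\in\operatorname*{int}E_{+}$, so the subgradient inequality may be evaluated at the point $x=x_{0}-sh\in E_{+}$, giving $\Phi(x_{0}-sh)\geq\Phi(x_{0})+T(-sh)=\Phi(x_{0})-s\,T(h)$, that is, $s\,T(h)\geq\Phi(x_{0})-\Phi(x_{0}-sh)$. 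On the other hand $x_{0}-sh\leq x_{0}$ with both points lying in $\operatorname*{int}E_{+}$, so the assumed isotonicity of $\Phi$ there forces $\Phi(x_{0}-sh)\leq\Phi(x_{0})$; combining the two estimates gives $s\,T(h)\geq0$, hence $T(h)\geq0$. As $h\in E_{+}$ was arbitrary, $T$ is a positive operator belonging to $\partial_{x_{0}}\Phi$.

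The only genuine obstacle is the legitimacy of evaluating the subgradient inequality at points of the form $x_{0}-sh$: this is exactly the role of the hypothesis $\operatorname*{int}E_{+}\neq\emptyset$, and it explains why it is enough to know that $\Phi$ is isotone merely on $\operatorname*{int}E_{+}$ rather than on all of $E_{+}$. The argument parallels the proof of Theorem \ref{thm4}: there the directions $-h$ with $h\in E_{+}$ are available from every point of $E$, while here they are available only from interior points of the cone, which suffices.
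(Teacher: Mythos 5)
Your proof is correct, but it takes a genuinely different route from the paper's. The paper obtains a positive subgradient only indirectly: it takes an arbitrary $S\in\partial_{x_{0}}\Phi$ supplied by Theorem \ref{thm1}, checks that $S$ together with $\Psi(x)=\Phi(x)-\Phi(x_{0})+S(x_{0})$ satisfies the hypothesis of the controlled-regularity Lemma \ref{lem4}, decomposes $S=T-U$ with $T,U$ positive and $T|_{E_{+}}\leq\Psi$, and then verifies $T(x_{0})=S(x_{0})$ so that $T$ is again a subgradient at $x_{0}$; this is precisely where the hypothesis that $F$ be an order complete Banach \emph{lattice} is used. You instead show directly that \emph{every} $T\in\partial_{x_{0}}\Phi$ is positive, exploiting the interiority of $x_{0}$ to step a small distance in the direction $-h$ for $h\in E_{+}$: the subgradient inequality at $x_{0}-sh$ combined with $\Phi(x_{0}-sh)\leq\Phi(x_{0})$ gives $sT(h)\geq0$ at once (no Archimedean argument is needed, unlike in the proof of Theorem \ref{thm4}, because you get a sign rather than merely an upper bound). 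Your argument is shorter, yields the stronger conclusion that the whole subdifferential at an interior point of the cone consists of positive operators (the analogue of statement $(b)$ of Theorem \ref{thm4}), and invokes $F$ only through Theorem \ref{thm1}, i.e.\ as an order complete Banach space rather than a lattice --- so, assuming I have not overlooked a subtlety, it also answers affirmatively the question the authors raise immediately after the theorem about relaxing the lattice hypothesis.
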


\begin{proof}
For the necessity part, suppose that $\Phi$ is isotone and let $x_{0}%
\in\operatorname*{int}E_{+}$ arbitrarily fixed. According to Theorem
\ref{thm1} there exists $S\in\mathcal{L}(E,F)$ such that $\Phi(x)\geq
\Phi(x_{0})+S(x-x_{0})$ for every $x\in E_{+}.$ Therefore if $x_{1},x_{2}\in
E_{+}$ and $0\leq x_{1}\leq x_{2}$ we have
\[
S(x_{1})\leq\Phi(x_{1})-\Phi(x_{0})+S(x_{0})\leq\Phi(x_{2})-\Phi
(x_{0})+S(x_{0}).
\]
The function $\Psi:E_{+}\rightarrow F$ defined by the formula $\Psi
(x)=\Phi(x)-\Phi(x_{0})+S(x_{0})$ is isotone, continuous and convex and the
pair $(S,\Psi)$ fulfils the assumptions of Lemma \ref{lem4}. As a consequence,
$S$ can be decomposed as the difference $T-U$ of two positive linear operators
with $T|_{E_{+}}\leq\Psi.$ We have
\begin{equation}
T(x)\leq\Psi(x)=\Phi(x)-\Phi(x_{0})+S(x_{0}) \label{eqx}%
\end{equation}
for all $x\in E_{+}.$ On the other hand,%
\[
T(x_{0})=S(x_{0})+U(x_{0})\geq S(x_{0})\text{ and }T(x_{0})\leq\Psi
(x_{0})=S(x_{0}),
\]
whence $S(x_{0})=T(x_{0}).~$According to (\ref{eqx}), we conclude that
\[
\Phi(x)\geq\Phi(x_{0})+T(x-x_{0})\text{\quad for all }x\in E_{+},
\]
that is, $T\in\partial_{x_{0}}\Phi$.

The sufficiency part of Theorem \ref{thm5} is obvious.
\end{proof}

At the moment we don't know whether Theorem \ref{thm5} holds in the case where
$F$ is only an order complete Banach space.

\begin{remark}
Under the assumptions of Theorem \emph{\ref{thm5}}, the isotonicity of $\Phi$
on $\operatorname*{int}E_{+}$ implies its isotonicity on $E_{+}$. Indeed,
suppose that $x,y\in E_{+}$ and $x\leq y$. For $x_{0}\in\operatorname*{int}%
E_{+}$ arbitrarily fixed and $t\in\lbrack0,1),$ both elements $u_{t}%
=x_{0}+t(x-x_{0})$ and $v_{t}=x_{0}+t(y-x_{0})$ belong to $\operatorname*{int}%
E_{+}$ and $u_{t}\leq v_{t}.$ Moreover, $u_{t}\rightarrow x$ and
$v_{t}\rightarrow y$ as $t\rightarrow1.$ Passing to the limit in the
inequality $\Phi(u_{t})\leq\Phi(v_{t})$ we conclude that $\Phi(x)\leq\Phi(y).$
\end{remark}

An important case when $\operatorname*{int}E_{+}\neq\emptyset$ is that of
ordered Banach spaces $E$ whose norm is associated to a strong order unit, for
example $c,$ $\ell^{\infty},$ $C([0,1]),$ $L^{\infty}(\mathbb{R}^{n})$ and
$\operatorname*{Sym}(n,\mathbb{R})$. See Amann \cite{Amann1976} for more examples.

The positive cone of each of the Lebesgue spaces $L^{p}(\mathbb{R}^{n})$ with
$1\leq p<\infty$ has empty interior. For such spaces the isotonicity on the
positive cone admits the following characterization.

\begin{theorem}
\label{thm6}Suppose that $E$ is an ordered Banach space and $F$ is an order
complete Banach space. Then a continuous convex function $\Phi:E_{+}%
\rightarrow F$ is isotone if \emph{(}and only if\emph{) }for every $x_{0}\in
E_{+},$ there exists a positive linear operator $T\in\mathcal{L}(E,F)$ such
that
\[
\Phi(x)-\Phi(x_{0})\geq T(x-x_{0})\text{\quad for all }x\geq x_{0}.
\]

\end{theorem}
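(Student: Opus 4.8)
The plan is to treat the two implications separately, the ``if'' part being immediate and the ``only if'' part resting on the algebraic extension theorem (Theorem \ref{thm3}) rather than on Theorem \ref{thm1}, since here the interior $\operatorname*{int}E_{+}$ may well be empty and so Theorem \ref{thm1} is not available.

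For the sufficiency, assume that for every $x_{0}\in E_{+}$ there is a positive operator $T\in\mathcal{L}(E,F)$ with $\Phi(x)-\Phi(x_{0})\geq T(x-x_{0})$ whenever $x\geq x_{0}$. Given $x\geq x_{0}$ in $E_{+}$, the element $x-x_{0}$ lies in $E_{+}$, so positivity of $T$ yields $T(x-x_{0})\geq0$, hence $\Phi(x)\geq\Phi(x_{0})$; this is exactly the isotonicity of $\Phi$ on $E_{+}$.

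For the necessity, fix $x_{0}\in E_{+}$ and introduce the function $\Psi:E_{+}\rightarrow F$, $\Psi(y)=\Phi(x_{0}+y)-\Phi(x_{0})$ (this is well defined since $E_{+}$ is closed under addition). It is convex, being a translate of $\Phi$, it satisfies $\Psi(0)=0$, and it is isotone because $\Phi$ is; consequently $\Psi(y)\geq\Psi(0)=0$ for every $y\in E_{+}$. I would then apply Theorem \ref{thm3} with $H=\{0\}$ and $T_{1}=0$: the hypothesis ``$T_{1}(h)\leq\Psi(x)$ for all $(h,x)\in H\times E_{+}$ with $h\leq x$'' reduces precisely to ``$0\leq\Psi(x)$ for all $x\in E_{+}$'', which has just been verified. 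Theorem \ref{thm3} then produces a positive linear operator $T:E\rightarrow F$ with $T|_{E_{+}}\leq\Psi$. By Lemma \ref{lem1*}, a positive linear operator between ordered Banach spaces is automatically continuous, so $T\in\mathcal{L}(E,F)$. Finally, for any $x\geq x_{0}$, setting $y=x-x_{0}\in E_{+}$ gives $T(x-x_{0})=T(y)\leq\Psi(y)=\Phi(x)-\Phi(x_{0})$, which is the required inequality.

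The step that carries all the weight is the invocation of Theorem \ref{thm3}; everything else is bookkeeping. The only genuine choice is to take the trivial subspace $H=\{0\}$, so that the compatibility condition of Theorem \ref{thm3} collapses to the mere positivity of $\Psi$ on the cone — and this is where the standing assumptions that $E_{+}$ is generating and that $F$ is order complete are really used (order completeness being, as the excerpt recalls after Theorem \ref{thm3}, also necessary for such extension results). It is worth recording that the continuity of $\Phi$ plays no role in this argument, Theorem \ref{thm3} being purely algebraic; thus the equivalence in fact holds for every convex $\Phi:E_{+}\rightarrow F$, and the continuity hypothesis is retained only to parallel the statements of Theorems \ref{thm4} and \ref{thm5}.
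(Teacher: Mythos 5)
Your proof is correct and follows essentially the same route as the paper: the necessity part is obtained by applying Theorem \ref{thm3} with $H=\{0\}$, $T_{1}=0$ to the translated function $\tilde{\Phi}(x)=\Phi(x+x_{0})-\Phi(x_{0})$, and the sufficiency part is immediate from positivity of $T$. Your explicit appeal to Lemma \ref{lem1*} to get continuity of the extension (which the paper leaves implicit) and your observation that continuity of $\Phi$ is not actually needed are both sound additions.
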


\begin{proof}
The necessity part follows from Theorem \ref{thm3} applied to $H=\{0\},$
$T_{1}=0$ and $\Phi$ replaced by $\tilde{\Phi}(x)=\Phi(x+x_{0})-\Phi(x_{0}).$
Since $\tilde{\Phi}$ is isotone and $\tilde{\Phi}(0)=0,$ the assumptions of
Theorem \ref{thm3} are fulfilled, so there exists a positive and linear
operator $T:E\rightarrow F$ such that $T|_{E_{+}}\leq\tilde{\Phi},$ that is,%
\[
\Phi(x+x_{0})-\Phi(x_{0})\geq T(x)\text{\quad for all }x\geq0.
\]

The sufficiency part is obvious.
\end{proof}

\section{Examples of Isotone Convex Functions}

\begin{example}
\label{ex1}Suppose that $E$ is a Banach lattice, $F$ is an order complete
Banach lattice and $A:E\rightarrow F$ is a positive linear operator. Then
\[
\Phi\left(  x\right)  =A\left(  x^{+}\right)  ,\text{\quad}x\in E
\]
defines a continuous sublinear operator which is monotone on $E$. The support
of $\Phi$ consists of all positive linear operators $T\in\mathcal{L}(E,F)$
dominated by $A.$ Theorem \emph{\ref{thm4}} asserts the striking fact that for
every $x_{0}\in E,$ $x_{0}\neq0,$ there exists a positive linear operator
$T\in\mathcal{L}(E,F)$ such that%
\[
T(x_{0})=A\left(  x_{0}^{+}\right)  \text{\quad and\quad}T(x)\leq A\left(
x^{+}\right)  \text{ for all }x\in E.
\]

\end{example}

\begin{example}
\label{ex2}The seminorm $\Phi:\ell^{\infty}\rightarrow\mathbb{R}$ defined by
\[
\Phi((x_{n})_{n})=\underset{n\rightarrow\infty}{\lim\sup}\left\vert
x_{n}\right\vert ,
\]
is continuous and nowhere G\^{a}teaux differentiable \emph{(}see
\cite{Ph1993}, Example \emph{1.21}, p. \emph{13) }and also isotone on the
positive\emph{ }cone\emph{ (}the set of all bounded sequences with nonnegative
coefficients\emph{). }According to Theorem \ref{thm5}, for every positive
sequence $(a_{n})_{n}\in\ell_{+}^{\infty}$ there exists a positive linear
functional $T:\ell^{\infty}\rightarrow\mathbb{R}$ such that
\[
T\left(  (a_{n})_{n}\right)  =\underset{n\rightarrow\infty}{\lim\sup}%
a_{n}\text{\quad and\quad}T\left(  (x_{n})_{n}\right)  \leq\underset
{n\rightarrow\infty}{\lim\sup}x_{n}\text{\quad for all }(x_{n})_{n}\in\ell
_{+}^{\infty}.
\]
Notice that $T$ cannot be an element of $\ell^{1},$ the predual of
$\ell^{\infty},$ and its existence is assured by the Axiom of choice.
\end{example}

\begin{example}
\label{ex3}Suppose that $E=C(K)$ \emph{(}where $K$ is a compact Hausdorff
space\emph{)} and $F$ is the Lebesgue space $L^{p}(m)$ associated to a
$\sigma$-additive probability measure $m$ and to an index $p\in\lbrack
1,\infty].$ As was noticed by Rubinov\emph{ \cite{Rub1977}}, p.\emph{ 130,}
the support of the sublinear and monotone operator
\[
\Phi:C\left(  K\right)  \rightarrow L^{p}(m),\text{\quad}\Phi(f)=\left(
\sup_{t\in K}f(t)\right)  \cdot1.
\]
consists of the Markov operators, that is,
\[
\partial\Phi=\left\{  T\in\mathcal{L}\left(  C\left(  K\right)  ,L^{p}%
(m)\right)  :T\geq0,\text{ }T(1)=1\right\}  .
\]
Indeed, changing $f$ to $-f$, we infer that every $T\in\partial\Phi$ also
verifies%
\[
\left(  \inf_{t\in K}f(t)\right)  \cdot1\leq Tf.
\]

According to Theorem \emph{\ref{thm4}}, for every function $h\in C\left(
K\right)  ,$ $h\neq0,$ there exists a Markov operator $T$ such that%
\[
T(h)=\left(  \sup_{t\in K}h(t)\right)  \cdot1\text{ and }T(f)\leq\left(
\sup_{t\in K}f(t)\right)  \cdot1\text{ for all }f\in C\left(  K\right)  .
\]

\end{example}

In the next example $E$ is the regularly ordered Banach space
$\operatorname*{Sym}(n,\mathbb{R}),$ of all $n\times n$-dimensional symmetric
matrices with real coefficients. The norm of a symmetric matrix $A$ is defined
by the formula%
\[
\left\Vert A\right\Vert =\sup_{\left\Vert x\right\Vert \leq1}\left\vert
\langle Ax,x\rangle\right\vert
\]
and the positive cone $\operatorname*{Sym}^{+}(n,\mathbb{R})$ of
$\operatorname*{Sym}(n,\mathbb{R})$ consists of all symmetric matrices $A$
such that $\langle Ax,x\rangle\geq0$ for all $x.$ As is well known, $A\geq0$
if and only if its spectrum $\sigma(A)$ is included in $[0,\infty).$ For
convenience, we will list the eigenvalues of a symmetric matrix $A\in
\operatorname*{Sym}(n,\mathbb{R})$ in decreasing order and repeated according
to their multiplicities as follows:%
\[
\lambda_{1}^{\downarrow}(A)\geq\cdots\geq\lambda_{n}^{\downarrow}(A).
\]
According to Weyl's monotonicity\emph{ }principle\emph{ }$($see
\emph{\cite{NP2018}}, Corollary \emph{4.4.3}, p. \emph{203}$)$,%
\[
A\leq B\text{ in }\operatorname*{Sym}(n,\mathbb{R})\text{ if and only if
}\lambda_{i}^{\downarrow}(A)\leq\lambda_{i}^{\downarrow}(B)\text{ for
}i=1,...,n.
\]

\begin{example}
\label{ex4}The map
\[
\Phi:\operatorname*{Sym}(n,\mathbb{R})\rightarrow\mathbb{R}^{n},\text{\quad
}\Phi(A)=\lambda_{1}^{\downarrow}(A)\cdot\mathbf{1},\text{ }%
\]
where $\mathbf{1}$ denotes the vector in $\mathbb{R}^{n}$ with all components
equal to 1, is sublinear and continuous. The sublinearity is a consequence of
the fact that
\[
\lambda_{1}^{\downarrow}(A)=\sup_{\left\Vert x\right\Vert =1}\langle
Ax,x\rangle,
\]
while the continuity follows from Weyl's\emph{ }perturbation theorem, which
asserts that for every pair of matrices $A,B$ in $\operatorname*{Sym}%
(n,\mathbb{R})$ we have
\[
\max_{1\leq k\leq n}|\lambda_{k}^{\downarrow}(A)-\lambda_{k}^{\downarrow
}(B)|\leq\Vert A-B\Vert.
\]
See\emph{ \cite{NP2018}}, Theorem \emph{4.4.7}, p. \emph{205}. From Weyl's
monotonicity principle,\emph{ }we easily infer that the sublinear operator
$\Phi$ is isotone on the whole space $\operatorname*{Sym}(n,\mathbb{R}).$

Every bounded linear operator $T:\operatorname*{Sym}(n,\mathbb{R}%
)\rightarrow\mathbb{R}^{n}$ in the support of $\Phi$ is positive $($since
$A\leq0$ implies $T(A)\leq\Phi(A)\leq0)$ and maps the identity matrix
$\mathrm{I}$ into the vector $\mathbf{1}$ \emph{(}since $T(\mathrm{I})\leq
\Phi(\mathrm{I})=\mathbf{1}$ and $T(\mathrm{I})\geq-\Phi(-\mathrm{I}%
)=\mathbf{1}).$ Conversely, if $T:\operatorname*{Sym}(n,\mathbb{R}%
)\rightarrow\mathbb{R}^{n}$ is a positive linear operator such that
$T(\mathrm{I})=\mathbf{1},$ then%
\[
A\leq\lambda_{1}^{\downarrow}(A)\mathrm{I}\Longrightarrow T(A)\leq\lambda
_{1}^{\downarrow}(A)\cdot T(\mathrm{I})=\lambda_{1}^{\downarrow}%
(A)\cdot\mathbf{1}=\Phi(A),
\]
that is, $T\in\partial\Phi.$

An important example of operator belonging to the subdifferential of $\Phi$ at
$\mathrm{I}$ is that which associates to each symmetric matrix $A$ the vector
$\operatorname*{diag}(A),$ displaying the main diagonal of $A$.

According to Theorem \emph{\ref{thm4}}, for every $A\in\operatorname*{Sym}%
(n,\mathbb{R}),$ $A\neq0,$ there exists a positive linear operator
$T:\operatorname*{Sym}(n,\mathbb{R})\rightarrow\mathbb{R}^{n}$\ such that
$T(A)=\lambda_{1}^{\downarrow}(A)\cdot\mathbf{1}$ and $T(B)\leq\lambda
_{1}^{\downarrow}(B)\cdot\mathbf{1}$ for all $B\in\operatorname*{Sym}%
(n,\mathbb{R}).$
\end{example}

Related to the above discussion is the subject of matrix concave/matrix
monotone functions that offers nice examples of isotone concave functions. The
basic ingredient is the functional spectral calculus with continuous
functions. See \cite{NP2018}, Section 5.2, for details. A continuous function
$f:[0,\infty)\rightarrow\lbrack0,\infty)$ is called \emph{matrix concave} if%
\begin{equation}
f((1-\lambda)A+\lambda B)\geq(1-\lambda)f(A)+\lambda f(B) \label{opconv}%
\end{equation}
for every $\lambda\in\lbrack0,1]$ and every pair of matrices $A$,
$B\in\operatorname*{Sym}(n,\mathbb{R})$ with spectra in $[0,\infty)$; the
function $f$ is called \emph{matrix monotone} if, for every pair of matrices
$A$, $B\in\operatorname*{Sym}(n,\mathbb{R})$ with spectra in $[0,\infty),$ the
following implication holds:%
\[
A\leq B\text{ implies }f(A)\leq f(B).
\]
Under these circumstances, one can prove that a continuous function from
$[0,\infty)$ into itself is matrix concave if and only if it is matrix
monotone. See \cite{NP2018}, Theorems 5.2.6 and 5.2.7, p.236. An example
illustrating this fact is provided by the function%
\[
\Phi:\operatorname*{Sym}\nolimits^{+}(n,\mathbb{R})\mathbb{\rightarrow
}\operatorname*{Sym}(n,\mathbb{R}),\text{\quad}\Phi(A)=A^{p},
\]
where $p\in(0,1]$ is a parameter. The matrix monotonicity of the function
$t^{p}$ (for $t>0$ and $0<p<1$) is known as the L\"{o}wner-Heinz inequality. A
short proof of it can be found in \cite{Ped}.

\section{The Case of Choquet Integral}

An interesting example of continuous sublinear operator which is monotone on
the whole space is provided by Choquet integral, which was introduced by
Choquet in \cite{Ch1954}.For the convenience of the reader we will briefly
recall some very basic facts concerning Choquet's theory of integrability with
respect to an isotone set function (not necessarily additive). Full details
are to be found in the books of Denneberg \cite{Denn}, Grabish \cite{Gr2016}
and Wang and Klir \cite{WK}.

Given be a compact Hausdorff space $X$ we attach to it the $\sigma$-algebra
$\mathcal{B}(X)$ of all Borel subsets of $X$ and the Banach lattice $C(X)$ of
all real-valued continuous functions defined on $X.$

\begin{definition}
\label{def1}A set function $\mu:\mathcal{B}(X)\rightarrow\mathbb{R}_{+}$ is
called a capacity if it verifies the following two conditions:

$(a)$ $\mu(\emptyset)=0$ and $\mu(X)=1;$

$(b)~\mu(A)\leq\mu(B)$ for all $A,B\in\mathcal{B}(X)$, with $A\subset B$.

A capacity $\mu$ is called submodular \emph{(}or strongly subadditive\emph{)}
if
\begin{equation}
\mu(A\bigcup B)+\mu(A\bigcap B)\leq\mu(A)+\mu(B) \label{eqsubmod}%
\end{equation}
for all $A,B\in\mathcal{B}(X).$
\end{definition}

A simple way to construct nontrivial examples of submodular capacities is to
start with a probability measure $P:\mathcal{B}(X)\rightarrow\lbrack0,1]$ and
to consider any nondecreasing concave function $u:[0,1]\rightarrow\lbrack0,1]$
such that $u(0)=0$ and $u(1)=1;$ for example, one may chose $u(t)=t^{a}$ with
$0<\alpha<1.$ We have%
\[
P(A\bigcup B)+P(A\bigcap B)=P(A)+P(B)
\]
and%
\[
P(A\bigcap B)\leq P(A),P(B)\leq P(A\bigcup B),
\]
for all $A,B\in\mathcal{B}(X),$ so by the Hardy-Littlewood-P\'{o}lya
Inequality of majorization (see \cite{NP2018}, Theorem 4.1.3, p. 186) we infer
that the \emph{distorted probability} $\mu=u\circ P$ verifies the equation
(\ref{eqsubmod}) that is, $\mu$ is a submodular capacity.

\begin{definition}
\label{def2}The Choquet integral of a Borel measurable function
$f:X\rightarrow\mathbb{R}$ on a set $A\in\mathcal{B}(X)$ is defined by the
formula
\begin{align}
(\operatorname*{C})\int_{A}f\mathrm{d}\mu &  =\int_{0}^{+\infty}\mu\left(
\{x\in X:f(x)\geq t\}\cap A\right)  \mathrm{d}t\nonumber\\
&  +\int_{-\infty}^{0}\left[  \mu\left(  \{x\in X:f(x)\geq t\}\cap A\right)
-\mu(A)\right]  \mathrm{d}t, \label{Ch}%
\end{align}
where the integrals in the right hand side are generalized Riemann integrals.

If $(C)\int_{A}f\mathrm{d}\mu$ exists in $\mathbb{R}$, then $f$ is called
Choquet integrable on $A$.
\end{definition}

Notice that if $f\geq0$, then the last integral in the formula (\ref{Ch}) is 0.

The inequality sign $\geq$ in the above two integrands can be replaced by $>;$
see \cite{WK}, Theorem 11.1,\emph{ }p. 226.

The Choquet integral agrees with the Lebesgue integral in the case of
probability measures. See Denneberg \cite{Denn}, p. 62.

The basic properties of the Choquet integral make the objective of the
following two remarks.

\begin{remark}
\label{rem2}\emph{(Denneberg \cite{Denn}, Proposition 5.1, p. 64) }$(a)$ If
$f$ and $g$ are Choquet integrable on the Borel set $A$, then
\begin{gather*}
f\geq0\text{ implies }(\operatorname*{C})\int_{A}f\mathrm{d}\mu\geq0\text{
\quad\emph{(}positivity\emph{)}}\\
f\leq g\text{ implies }\left(  \operatorname*{C}\right)  \int_{A}%
f\mathrm{d}\mu\leq\left(  \operatorname*{C}\right)  \int_{A}g\mathrm{d}%
\mu\text{ \quad\emph{(}monotonicity\emph{)}}\\
\left(  \operatorname*{C}\right)  \int_{A}af\mathrm{d}\mu=a\cdot\left(
\operatorname*{C}\right)  \int_{A}f\mathrm{d}\mu\text{ for all }a\geq0\text{
\quad\emph{(}positive\emph{ }homogeneity\emph{)}}\\
\left(  \operatorname*{C}\right)  \int_{A}1~\mathrm{d}\mu=\mu(A)\text{\quad
\emph{(}calibration\emph{).}}%
\end{gather*}
$(b)$ In general, the Choquet integral is not additive but \emph{(}as was
noticed by Dellacherie \emph{\cite{Del1970})}, if $f$ and $g$ are comonotonic
\emph{(}that is, $(f(\omega)-f(\omega^{\prime}))\cdot(g(\omega)-g(\omega
^{\prime}))\geq0$, for all $\omega,\omega^{\prime}\in A$\emph{), }then
\[
\left(  \operatorname*{C}\right)  \int_{A}(f+g)\mathrm{d}\mu=\left(
\operatorname*{C}\right)  \int_{A}f\mathrm{d}\mu+\left(  \operatorname*{C}%
\right)  \int_{A}g\mathrm{d}\mu.
\]
An immediate consequence is the property of translation invariance,
\[
\left(  \operatorname*{C}\right)  \int_{A}(f+c)\mathrm{d}\mu=\left(
\operatorname*{C}\right)  \int_{A}f\mathrm{d}\mu+c\cdot\mu(A)
\]
for all $c\in\mathbb{R}$ and $f$ integrable on $A.$
\end{remark}

\begin{remark}
\label{rem3}\emph{(The Subadditivity Theorem) }If $\mu$ is a submodular
capacity, then\ the associated Choquet integral is subadditive, that is,%
\[
\left(  \operatorname*{C}\right)  \int_{A}(f+g)\mathrm{d}\mu\leq\left(
\operatorname*{C}\right)  \int_{A}f\mathrm{d}\mu+\left(  \operatorname*{C}%
\right)  \int_{A}g\mathrm{d}\mu
\]
for all $f$ and $g$ integrable on $A.$ See\emph{ \cite{Denn}, }Theorem\emph{
6.3, }p\emph{. 75. }In addition, the following two integral analogs of the
modulus inequality hold true,
\[
|(\operatorname*{C})\int_{A}f\mathrm{d}\mu|\leq(\operatorname*{C})\int
_{A}|f|\mathrm{d}\mu
\]
and
\[
|(\operatorname*{C})\int_{A}f\mathrm{d}\mu-(\operatorname*{C})\int
_{A}g\mathrm{d}\mu|\leq(\operatorname*{C})\int_{A}|f-g|\mathrm{d}\mu;
\]
the last assertion being covered by Corollary \emph{6.6}, p. \emph{82}, in
\emph{\cite{Denn}.}
\end{remark}

According to Remarks \ref{rem2} and \ref{rem3}, the Choquet integral defines a
continuous sublinear functional%
\[
p:C(X)\rightarrow\mathbb{R},\text{\quad}p(f)=(\operatorname*{C})\int
_{A}f\mathrm{d}\mu,
\]
which is isotone on the whole space. This example admits an immediate vector
valued companion $P:C(X)\rightarrow\mathbb{R}^{n},$ associated by a family
$\mu_{1},...,\mu_{n}$ of submodular capacities on $\mathcal{B}(X)$ via the
formula
\[
\text{\quad}P(f)=\left(  (C)\int_{X}f\mathrm{d}\mu_{1},...,(C)\int
_{X}f\mathrm{d}\mu_{n}\right)  .
\]
According to Theorem \ref{thm4}, the following result relating the Choquet
integral to Lebesgue integral holds true.

\begin{theorem}
\label{thm7}For each $h\in C(X),$ $h\neq0,$ there exists a linear and positive
operator $T:C(X)\rightarrow\mathbb{R}^{n}$ such that $T\leq P$ and
$T(h)=P(h).$
\end{theorem}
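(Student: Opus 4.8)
The plan is to apply Theorem \ref{thm4} to the vector-valued sublinear operator $P:C(X)\to\mathbb{R}^{n}$ defined by the family $\mu_{1},\dots,\mu_{n}$ of submodular capacities. First I would check that $\mathbb{R}^{n}$, equipped with the coordinatewise order, is an order complete Banach space (indeed an order complete Banach lattice), so it qualifies as the codomain $F$ in Theorem \ref{thm4}, and that $E=C(X)$ is an ordered Banach space with the uniform norm and the cone of nonnegative functions. The substantive hypotheses to verify are that $P$ is (i) well defined, (ii) continuous, (iii) convex, and (iv) isotone on the whole space $C(X)$. Each of these reduces coordinatewise to the corresponding property of the scalar Choquet functional $p_{i}(f)=(\mathrm{C})\int_{X}f\,\mathrm{d}\mu_{i}$. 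Well-definedness and positive homogeneity follow from Remark \ref{rem2}; continuity follows from the modulus inequality in Remark \ref{rem3}, which gives $|p_{i}(f)-p_{i}(g)|\le (\mathrm{C})\int_{X}|f-g|\,\mathrm{d}\mu_{i}\le \|f-g\|_{\infty}$, since $\mu_{i}(X)=1$; subadditivity (hence, together with positive homogeneity, convexity and even sublinearity) follows from the Subadditivity Theorem in Remark \ref{rem3}, which uses exactly the submodularity of $\mu_{i}$; and isotonicity on all of $C(X)$ follows from the monotonicity property in Remark \ref{rem2}. Assembling these coordinatewise statements, $P$ is a continuous, sublinear, isotone operator on $C(X)$.

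Next I would invoke Theorem \ref{thm4} with $\Phi=P$ and the distinguished point $x_{0}=h$. The equivalence $(a)\Leftrightarrow(c)$ in Theorem \ref{thm4} says that since $P$ is isotone, its subdifferential at $h$ contains a positive linear operator $T\in\mathcal{L}(C(X),\mathbb{R}^{n})$; by definition of the subdifferential this $T$ satisfies $P(f)\ge P(h)+T(f-h)$ for all $f\in C(X)$. It remains to upgrade this subgradient inequality to the two asserted conclusions $T\le P$ and $T(h)=P(h)$. Here I would use that $P$ is positively homogeneous: the subgradient inequality at $h$ for a sublinear operator always forces $T(h)=P(h)$ and $T\le P$ globally. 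Concretely, writing $g=f-h$ and using $P(h+g)\ge P(h)+T(g)$, replace $g$ by $tg$ and let $t\to\infty$ after dividing by $t$ to get $P(g)\ge T(g)$ for every $g$ (using $P(h+tg)/t\to P(g)$ by positive homogeneity and boundedness of the constant term); this is $T\le P$. Then applying $T\le P$ to $-h$ gives $-T(h)=T(-h)\le P(-h)$, while the subgradient inequality at $h$ evaluated at $f=0$ gives $0=P(0)\ge P(h)+T(-h)$, i.e. $T(h)\ge P(h)$; combined with $T(h)=T(h-0)\le P(h)-P(0)+\cdots$, or more directly with $T\le P$ applied to $h$ giving $T(h)\le P(h)$, we conclude $T(h)=P(h)$. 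Finally, positivity of $T$ is already part of what Theorem \ref{thm4}$(c)$ supplies, and continuity is automatic by Lemma \ref{lem1*} or by the hypothesis $T\in\mathcal{L}(C(X),\mathbb{R}^{n})$.

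The main obstacle is purely bookkeeping rather than conceptual: one must make sure the verification of the four properties of $P$ really does reduce cleanly to the cited scalar facts about the Choquet integral, in particular that continuity is genuinely available — this is where the calibration $\mu_{i}(X)=1$ and the modulus inequality of Remark \ref{rem3} do the work — since Theorem \ref{thm4} requires $\Phi$ to be a \emph{continuous} convex function. A secondary, minor point is the passage from "$T\in\partial_{h}\Phi$'' to the clean pair of conclusions "$T\le P$ and $T(h)=P(h)$''; this is the standard fact that for a sublinear operator the subdifferential at an arbitrary point $h$ consists precisely of those $T\in\partial P=\{S:S\le P\}$ with $T(h)=P(h)$, and it costs only the short homogeneity argument sketched above. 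No order completeness subtlety arises because $\mathbb{R}^{n}$ is finite dimensional, so Theorem \ref{thm1} applies without fuss and $\partial_{h}P\neq\emptyset$ to begin with.
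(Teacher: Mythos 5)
Your proposal is correct and follows essentially the same route as the paper: the paper's (very terse) justification is precisely to note, via Remarks \ref{rem2} and \ref{rem3}, that $P$ is a continuous, sublinear, isotone operator into the order complete space $\mathbb{R}^{n}$ and then to invoke Theorem \ref{thm4}. You merely make explicit the two steps the paper leaves implicit --- the coordinatewise verification of the hypotheses and the standard homogeneity argument converting a subgradient at $h$ into the pair of conclusions $T\leq P$ and $T(h)=P(h)$ --- and both of these details are carried out correctly.
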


Taking into account the\ Riesz-Kakutani representation theorem for positive
linear functionals on $C(K)$ (see \cite{Bog}, Theorem 7.10.4, p. 111), we
infer from Remark \ref{rem2}, Remark \ref{rem3}, and Theorem \ref{thm7} that
actually every submodular capacity $\boldsymbol{\mu}:$ $\mathcal{B}%
(K)\rightarrow\mathbb{R}^{n}$ is the least upper bound of all positive,
regular and $\sigma$-additive vector measures $m$ dominated by
$\boldsymbol{\mu}.$ This improves on Proposition 10.3 in \cite{Denn}. Related
results with applications to inequalities are presented by Mesiar, Li and Pap
\cite{MLP2010}.

Approximation theory emphasizes many other examples of continuous, sublinear
and isotone operators. See \cite{Gal-Mjm}, \cite{GN2020a} and \cite{GN2020b}.
We mention here the case of the \emph{Bernstein-Kantorovich-Choquet }operator
$K_{n,\mu}:C([0,1])\rightarrow C([0,1]),$ associated to a submodular capacity
$\mu$ via the formula%
\[
K_{n,\mu}(f)(x)=\sum_{k=0}^{n}\frac{(C)\int_{k/(n+1)}^{(k+1)/(n+1)}f(t)d\mu
}{\mu([k/(n+1),(k+1)/(n+1)])}\cdot{\binom{n}{k}}x^{k}(1-x)^{n-k}.
\]

\medskip

\noindent\textbf{\noindent Acknowledgement. }The authors would like to thank
Professor \c{S}tefan Cobza\c{s} from the Babe\c{s}-Bolyai University of
Cluj-Napoca, for several improvements on the original version of this paper.

\end{document}